%
%
%
\documentclass[a4paper,11pt]{article}
\usepackage[latin1]{inputenc}
\usepackage[english]{babel}
\usepackage{amsmath}
\usepackage{amsfonts}
\usepackage{amssymb}
\usepackage{epsfig}
\usepackage{amsopn}
\usepackage{amsthm}
\usepackage{color}
\usepackage{graphicx}
\usepackage{enumerate}
\parindent=4pt
\parskip=2pt
\addtolength{\hoffset}{-1cm} \addtolength{\textwidth}{2.4cm}
\addtolength{\voffset}{-1cm} \addtolength{\textheight}{2cm}
\newtheorem{theorem}{Theorem}[section]

\newtheorem{lemma}[theorem]{Lemma}
\newtheorem{proposition}[theorem]{Proposition}
\newtheorem{definition}[theorem]{Definition}

\newtheorem*{theorem*}{Theorem}
\newtheorem*{lemma*}{Lemma}
\newtheorem*{remark*}{Remark}
\newtheorem*{definition*}{Definition}
\newtheorem*{proposition*}{Proposition}
\newtheorem*{corollary*}{Corollary}
\numberwithin{equation}{section}
%

\newcommand{\real}{\mathbb{R}}



\let\ced=\c         

\def\a{\alpha}
\def\b{\beta}

\def\e{\varepsilon}        


\def\ca{{\cal A}}

\newcommand{\dx}{\,{\rm d}x}

\newcommand{\ds}{\,{\rm d}s}


\def\qed{\,\unskip\kern 6pt \penalty 500
\raise -2pt\hbox{\vrule \vbox to8pt{\hrule width 6pt
\vfill\hrule}\vrule}\par}
\definecolor{darkblue}{rgb}{0.05, .05, .65}
\definecolor{darkgreen}{rgb}{0.1, .65, .1}
\definecolor{darkred}{rgb}{0.8,0,0}
\newcommand{\beqn}{\begin{equation}}
\newcommand{\eeqn}{\end{equation}}
\newcommand{\bear}{\begin{eqnarray}}
\newcommand{\eear}{\end{eqnarray}}
\newcommand{\bean}{\begin{eqnarray*}}
\newcommand{\eean}{\end{eqnarray*}}
%


\begin{document}

\title{\huge \bf Existence and uniqueness of very singular solutions for a
fast diffusion equation with gradient absorption\footnote{Partially supported by Laboratoire Europ\'een Associ\'e CNRS Franco-Roumain MathMode Math\'ematiques \& Mod\'elisation}}

\author{
\Large Razvan Gabriel Iagar\,\footnote{Institut de Math\'ematiques
de Toulouse, CNRS UMR~5219, Universit\'e de Toulouse, F--31062
Toulouse Cedex 9, France.},\footnote{Institute of Mathematics of the
Romanian Academy, P.O. Box 1-764, RO-014700, Bucharest, Romania,
\textit{e-mail:} razvan.iagar@imar.ro.}
\\[4pt] \Large Philippe Lauren\c cot\,\footnote{Institut de
Math\'ematiques de Toulouse, CNRS UMR~5219, Universit\'e de
Toulouse, F--31062 Toulouse Cedex 9, France. \textit{e-mail:}
Philippe.Laurencot@math.univ-toulouse.fr}\\ [4pt] }
\date{}
\maketitle

\begin{abstract}
Existence and uniqueness of radially symmetric self-similar very singular solutions are proved for the singular diffusion equation
with gradient absorption
\begin{equation*}
\partial_t u -\Delta_{p}u+|\nabla u|^q=0, \ \hbox{in} \
(0,\infty)\times\real^N,
\end{equation*}
where $2N/(N+1)<p<2$ and $p/2<q<p-N/(N+1)$, thereby extending previous results restricted to $q>1$. 
\end{abstract}

\vspace{2.0 cm}

\noindent {\bf AMS Subject Classification:} 35K67, 35K92, 34B40, 34C11, 35B33.

\medskip

\noindent {\bf Keywords:}  Very singular solution, singular diffusion, gradient absorption,
self-similar solutions, $p$-Laplacian, uniqueness.
\newpage

\section{Introduction}

The singular diffusion equation with gradient absorption
\begin{equation}\label{FDE}
\partial_t u-\Delta_{p}u+|\nabla u|^q=0, \quad (t,x)\in
Q_{\infty}:=(0,\infty)\times\real^N,
\end{equation}
with $p\in (1,2)$ and $q>0$ features a singular diffusion term and an absorption term depending solely on the gradient. Taken apart, these two terms lead to two completely different behaviors for large times so that, putting them together, a competition between them is expected. In fact, we study recently in \cite{IL11} qualitative properties and decay estimates for nonnegative and bounded solutions to \eqref{FDE} and identify ranges of exponents $p\in (1,2)$ and $q>0$ with different behaviors. In particular, given $p\in (2N/(N+1),2)$, if $q>q_*:=p-(N/(N+1))$, the diffusion term dominates for large times while it is the absorption term that dominates when $q \in (0,p-1)$ leading to finite time extinction. Finite time extinction also occurs for $q\in (p-1,p/2)$ but is expected to be of a different nature, some influence of the diffusion term persisting near the extinction time. Finally, when $q\in (p/2,q_*)$, the solutions to \eqref{FDE} with initial data decaying sufficiently rapidly at infinity decay to zero at a faster algebraic rate than the one that would result from the diffusion alone, a feature which reveals an interplay between diffusion and absorption. By analogy with the existing literature on related problems (see, e.g., \cite{BKaL04, EK88, Va93} and the references therein), the large time behavior in that case is expected to be described by a particular self-similar solution to \eqref{FDE} which is called a \emph{very singular solution}. Recall that a very singular solution to a partial differential equation is a solution $U$ in $Q_\infty$ (in a weak or classical sense) such that 
\begin{equation}\label{cond2VSS}
\lim\limits_{t\to 0} \sup_{|x|>\e}\{ U(t,x) \} = 0,
\end{equation}
and
\begin{equation}\label{cond1VSS}
\lim\limits_{t\to 0}\int_{|x|<\e} U(t,x)\,\dx=\infty,
\end{equation}
for any $\e>0$ \cite{BPT}. In other words, the initial condition for $U$ is zero in $\real^ N\setminus\{0\}$ and it has a stronger singularity at the origin $x=0$ than the Dirac mass. Recall that a solution to a partial differential equation is usually referred to as a \emph{singular solution} if it satisfies \eqref{cond2VSS}, the main examples being the so-called \emph{fundamental solutions}, that is, solutions having a Dirac mass as initial condition, and the very singular solutions. The existence and non-existence of singular solutions have been thoroughly studied for diffusion equations with a zero-order absorption term and we refer to, e.g.,  \cite{BPT, KPV89, KVe88, Le96, Le97, PT86} for $\partial_t u - \Delta u^m + u^q =0$, $m>0$, and \cite{CQW03, KV92, PW88} for $\partial_t u - \Delta_p u + u^q =0$, $p>1$. The case of diffusion equations with an absorption term depending solely on the gradient has been investigated more recently, see, e.g., \cite{BL01,BKL04,QW01} for $\partial_t u - \Delta u^m + |\nabla u|^q =0$, $m\ge 1$, and \cite{Pe041} for $\partial_t u - \Delta_p u + |\nabla u|^q =0$, $p>2$. 

\medskip

In this paper we focus on the singular diffusion equation with gradient absorption \eqref{FDE} for the particular range of exponents $p$ and $q$ for which very singular solutions are likely to exist, namely:
\begin{equation}\label{exp}
p_c:=\frac{2N}{N+1}<p<2, \quad \frac{p}{2}<q<q_{*}=p-\frac{N}{N+1},
\end{equation}
as already mentioned. Owing to the homogeneity of \eqref{FDE}, we actually look for a (forward) self-similar and radially symmetric very singular solution $u$ to \eqref{FDE} of the form
\begin{equation*}
u(t,x)=t^{-\a}f(|x|t^{-\b}), \quad (t,x)\in Q_\infty\,,
\end{equation*}
for some profile $f$ and exponents $\a$ and $\b$ to be determined. Inserting this ansatz in \eqref{FDE} gives the values of $\a$ and $\b$
\begin{equation}\label{VSSS}
\a=\frac{p-q}{2q-p}, \qquad \b=\frac{q-p+1}{2q-p},
\end{equation}
and implies that the profile $f=f(r)$, $r=|x|t^{-\b}$, is a solution of the ordinary differential equation
\begin{equation}\label{ODE1}
(|f'|^{p-2}f')'(r)+\frac{N-1}{r}(|f'|^{p-2}f')(r)+\a f(r)+\b rf'(r)-|f'(r)|^q=0\,, \qquad r> 0,
\end{equation}
with $f'(0)=0$. The previous conditions \eqref{cond2VSS} and \eqref{cond1VSS} become
\begin{equation}
\lim\limits_{r\to\infty} r^{(p-q)/(q-p+1)}f(r)=0, \quad
\lim\limits_{t\to 0} t^{N\b-\a}\int_{r<\e
t^{-\b}}f(r)r^{N-1}\,dr=\infty,
\label{fantasio}
\end{equation}
the second one being satisfied if $f\in L^1(0,\infty;r^{N-1} dr)$ and $\a-N\b>0$, that is $q<q_{*}$. 

\medskip

We then prove the following result:
\begin{theorem}\label{th:main}
Assume that $p$ and $q$ satisfy \eqref{exp}. There is a unique nonnegative solution $f$ to \eqref{ODE1} which satisfies $f'(0)=0$ and 
\begin{equation}
\lim\limits_{r\to\infty} r^{(p-q)/(q-p+1)}\ f(r) = 0\,. 
\label{gaston}
\end{equation}
In fact, there is a positive constant $w^ *$ defined in \eqref{spip} below such that  
$$
\lim\limits_{r\to\infty} r^{p/(2-p)}\ f(r) = w^*\,.
$$
\end{theorem}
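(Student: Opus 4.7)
My plan is to use a shooting method from the origin. For each $a>0$, let $f_a$ denote the maximal nonnegative solution of \eqref{ODE1} with $f_a(0)=a$ and $f_a'(0)=0$. A first step is to establish local existence and uniqueness of such an $f_a$ together with continuous dependence on $a$. Since neither $|f'|^{p-2}$ nor the absorption $|f'|^q$ (the latter being non-Lipschitz once $q<1$, which is now allowed) is regular at $f'=0$, this calls for a contraction argument tailored to the doubly degenerate structure; the expansion $|f_a'|^{p-1}(r)\sim \a a r/N$ as $r\to 0^+$ provides the starting behavior needed to pin $f_a$ down uniquely at the origin.

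Next I would classify the profiles $f_a$ by a shooting dichotomy. Define
\begin{align*}
A &:= \{a>0 : f_a \text{ reaches } 0 \text{ at a finite radius}\}, \\
B &:= \{a>0 : f_a>0 \text{ on } [0,\infty) \text{ and } \liminf_{r\to\infty} r^{(p-q)/(q-p+1)} f_a(r)>0\}.
\end{align*}
Comparison with a profile dominated by absorption shows that $A$ contains all sufficiently large $a$; comparison with a solution of the transport equation $\a f+\b r f'=0$, whose natural decay is exactly $r^{-(p-q)/(q-p+1)}$ (the slow self-similar rate), shows that $B$ contains all sufficiently small $a$. Continuous dependence on $a$ gives openness of both sets, so the complement $C:=(0,\infty)\setminus (A\cup B)$ is nonempty and any $a^*\in C$ produces a profile satisfying \eqref{gaston}.

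For $a\in C$ I would then extract the asymptotic behavior at infinity. Introducing the rescaling $w(s):=r^{p/(2-p)}f_a(r)$ with $s=\log r$ turns \eqref{ODE1} into an autonomous planar system; the exponent $p/(2-p)$ is precisely the one balancing the $p$-Laplacian against the transport term $\a f+\b r f'$, while $|f'|^q$ is of lower order at infinity owing to $q>p/2$. A phase-plane analysis then identifies a unique positive equilibrium $w^*$ and shows that every bounded positive orbit associated with $a\in C$ converges to it, yielding the stated asymptotic formula.

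For uniqueness of $a^*\in C$, I would argue by a separation/monotonicity lemma: if $a_1<a_2$ both lie in $C$, then $f_{a_1}<f_{a_2}$ near the origin, and any crossing at some finite $r_0>0$ would contradict a maximum-principle argument on the difference; since both profiles share the same leading asymptotics $w^* r^{-p/(2-p)}$, an integrated identity obtained by multiplying \eqref{ODE1} by a suitable weight and integrating over $(0,\infty)$ would force them to coincide. The main obstacle I foresee is precisely this uniqueness step together with the openness of $A$ and $B$ at radii where $f_a'$ vanishes: the non-Lipschitz absorption $|f'|^q$ with $q<1$ prevents standard linearization, so a specific identity adapted to the range $p/2<q<q_*$ must be devised to close the argument.
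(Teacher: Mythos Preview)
Your shooting framework is the same as the paper's, but the dichotomy is reversed, and the heuristics you give for it are incorrect. In the paper it is the \emph{small} values of $a$ that yield compactly supported profiles and the \emph{large} values that yield slow orbits (your set $B$). The reason is the following rescaling: writing $f(r;a)=a\,g(r a^{(2-p)/p})$, the absorption term picks up the factor $a^{(2q-p)/p}$, which tends to $0$ as $a\to 0$ because $q>p/2$; the limiting profile solves $(|h'|^{p-2}h')' + (N-1)r^{-1}|h'|^{p-2}h' + \beta r h' + \alpha h = 0$, which is compactly supported. Conversely, for large $a$ the crude bound $f(r)\ge a-(\alpha a)^{1/q}r$ forces $w(r):=r^{p/(2-p)}f(r)$ to exceed the equilibrium $w^*$ well before $f$ could vanish, trapping the orbit in the slow regime. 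Gradient absorption by itself does not create finite-radius vanishing, so ``comparison with a profile dominated by absorption'' cannot put large $a$ into $A$; and the transport equation $\alpha f+\beta r f'=0$ has solutions blowing up at the origin, so it gives no information for small $a$.

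Two further remarks. First, your concern about non-Lipschitz $|f'|^q$ when $q<1$ is unfounded: in the variables $(f,F)$ with $F=-|f'|^{p-2}f'$ one has $|f'|^q=|F|^{q/(p-1)}$, and since $q>p/2>p-1$ the exponent $q/(p-1)$ exceeds $1$, so the system is locally Lipschitz and well-posedness is classical. Second, a genuinely nontrivial step that your outline misses is the equivalence between ``$w'$ changes sign'' and ``$f$ vanishes at a finite radius''. The paper rules out the scenario $w\to 0$ with $f>0$ on $(0,\infty)$ by invoking the gradient estimate $|f'|\le C f^{2/p}$ (obtained from viscosity-solution bounds for \eqref{FDE} in a companion paper): it yields $r|f'|/f\le C\,w^{(2-p)/p}\to 0$, hence $w'\ge 0$ for large $r$, a contradiction. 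Without this estimate, the shooting argument does not close, and nothing in your phase-plane sketch supplies a substitute.
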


Let us first mention that Theorem~1.1 is given in \cite{Pe042} under the additional restriction that $q>1$ besides the constraints \eqref{exp} on $p$ and $q$. However, the proof given there does not seem to apply to the case $q\in (p/2,1]$ and some new ideas have to be introduced which actually work for the whole range defined in \eqref{exp}.  The proof given below is thus done for $p$ and $q$ satisfying \eqref{exp}. 

As usual, Theorem~\ref{th:main} is a consequence of a detailed study of the initial value problem associated to \eqref{ODE1}, that is, we consider the solution $f(.;a)$ to \eqref{ODE1} with initial data $f(0;a)=a>0$ and $f'(0;a)=0$. We first establish the well-posedness of this problem at the beginning of Section~\ref{sect.uniqVSS} together with some basic properties of its solutions in Section~\ref{sec:bpf}. In particular, we show that, if $a>0$ is such that $f(.;a)$ is positive in $(0,\infty)$, then $|f'(.;a)|$ is controlled by $f(.;a)^{2/p}$ as a consequence of sharp gradient estimates established in \cite{IL11}. This turns out to be a cornerstone of the proof as it allows us to transfer some asymptotic properties of $f(.;a)$ to $f'(.;a)$. We next prove in Section~\ref{sec:drcf} that a monotonicity property with respect to the initial condition $a$ is enjoyed by the solutions $f(.;a)$ under suitable conditions. Such a monotonicity property is also true for the singular diffusion equation with zero-order absorption studied in \cite{CQW03}. We then split the range of initial conditions $a\in (0,\infty)$ into three disjoint sets according to the behavior of the derivative of $r\mapsto r^ {p/(2-p)} f(r;a)$ and characterize these three sets in Sections~\ref{sec:ca}, \ref{sec:cc}, and~\ref{sec:cbu}. This study then guarantees the existence of at least one profile $f$ satisfying the properties listed in Theorem~\ref{th:main}. We finally borrow an argument from \cite{CQW03} to prove the uniqueness of this solution.

\section{Self-similar and radially symmetric very singular solutions}\label{sect.uniqVSS}

In order to construct a solution to \eqref{ODE1} satisfying \eqref{gaston}, we use a shooting method which leads us to the following initial value problem:
\begin{equation}\label{IVPa}
\left\{
\begin{array}{l}
(|f'|^{p-2}f')'(r)+\displaystyle{\frac{N-1}{r}} (|f'|^{p-2}f')(r)+\a f(r)+\b rf'(r)-|f'(r)|^q=0 , \\ 
 \\
f(0)=a, \ f'(0)=0,
\end{array}\right.
\end{equation}
where $a>0$ is an arbitrary positive real number and the condition $f'(0)=0$ follows from the radial symmetry and expected smoothness of very singular solutions. Introducing $F:=-|f'|^{p-2} f'$, we observe that \eqref{IVPa} also reads
\begin{equation}\label{phl1}
\left\{
\begin{array}{l}
f'(r) = - |F(r)|^{(2-p)/(p-1)} F(r) ,\\
 \\
F'(r)+\displaystyle{\frac{N-1}{r}} F(r) = \a f(r) - \b r |F(r)|^{(2-p)/(p-1)} F(r)  - |F(r)|^{q/(p-1)} ,\\ 
 \\
f(0)=a, \ g(0)=0.
\end{array}\right.
\end{equation}
Since $q>p-1$ and $(2-p)/(p-1)>0$, the right-hand side of \eqref{phl1} is locally Lipschitz continuous. The term involving  $(N-1)/r$ being handled as usual, there is a unique maximal $C^1$-smooth solution $(f(.;a),F(.;a))$ to \eqref{phl1}. Owing to the continuity of $f(.;a)$ and the positivity of $a$, $f(.;a)$ is clearly positive in a right-neighborhood of $r=0$ and we define
\begin{equation}
R(a) := \inf{\left\{ r\ge 0\ :\ f(r;a)=0 \right\}} > 0 .
\label{phl2}
\end{equation}
In the sequel, where there is no risk
of confusion, we will omit $a$ from the notation and let
$f=f(.;a)$.

\subsection{Basic properties of $f(.;a)$}\label{sec:bpf}

We first prove some basic properties of the profile $f=f(.;a)$.

\begin{lemma}\label{lemma.basic}
Let $a>0$. We have $f'(r;a)<0$ for any $r\in(0,R(a))$ and $|f'(r;a)|\leq (\a
a)^{1/q}$ in $(0,R(a))$. Moreover, if $R(a)=\infty$, then
$\lim\limits_{r\to\infty}f(r;a)=\lim\limits_{r\to\infty}f'(r;a)=0$.
\end{lemma}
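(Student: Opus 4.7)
The plan rests on the integrated form
\[
r^{N-1}|f'|^{p-2}f'(r) = \int_0^r s^{N-1}\bigl(|f'(s)|^q - \a f(s) - \b s f'(s)\bigr)\,ds
\]
of the ODE in \eqref{IVPa}, valid because $f'(0)=0$. For the sign of $f'$, observe that as $r\to 0$ the integrand is dominated by $-\a a\, s^{N-1}$ (since $f(s)\to a$ and $f'(s)\to 0$), so $|f'|^{p-2}f'(r)\sim -\a a\, r/N$, forcing $f'(r)<0$ in a right-neighbourhood of $0$. To extend this to all of $(0,R(a))$, let $r_0$ be the infimum of zeros of $f'$ in $(0,R(a))$: evaluating \eqref{IVPa} at $r_0$, every $f'$-dependent term vanishes, leaving $(|f'|^{p-2}f')'(r_0)=-\a f(r_0)<0$. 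This contradicts the fact that $|f'|^{p-2}f'$ is negative on $(0,r_0)$ and equals $0$ at $r_0$, which forces its left-derivative at $r_0$ to be nonnegative (recall that this quantity is $C^1$ by the well-posedness of \eqref{phl1}).

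To bound $|f'|$, set $g:=-f'>0$, so that the ODE rewrites as
\[
(g^{p-1})'(r) = -\frac{N-1}{r}g^{p-1}(r) + \a f(r) - \b r g(r) - g(r)^q,
\]
and note that $f\leq a$ on $(0,R(a))$ by the monotonicity just proved. Let $M:=(\a a)^{1/q}$ and suppose, for a contradiction, that $g(r_1)>M$ at some $r_1\in(0,R(a))$. Since $g(0^+)=0<M$, continuity yields a maximal open interval $(r_*,r_{**})$ containing $r_1$ on which $g>M$, with $r_*>0$ and $g(r_*)=M$. On this interval $\a f\leq \a a=M^q<g^q$, so every term on the right-hand side above is strictly negative; therefore $g^{p-1}$ is strictly decreasing on $[r_*,r_{**})$, contradicting $g(r)>M=g(r_*)$ for $r$ immediately to the right of $r_*$.

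Assume now $R(a)=\infty$, so that $f$ is positive and decreasing. Then $\ell:=\lim_{r\to\infty}f(r)\in[0,a]$ exists and $\|g\|_{L^1(0,\infty)}=a-\ell<\infty$. Starting from the integrated ODE and integrating by parts the $\b sf'$ term produces the identity
\[
r^{N-1}g(r)^{p-1} = (\a-N\b)\int_0^r s^{N-1}f(s)\,ds + \b r^N f(r) - \int_0^r s^{N-1}g(s)^q\,ds,
\]
where $\a-N\b>0$ thanks to $q<q_*$. Dividing by $r^N$ and letting $r\to\infty$, the left-hand side vanishes since $g$ is bounded; Ces\`aro's lemma applied to the first integral, together with $\b f(r)\to\b\ell$, gives the combined limit $(\a-N\b)\ell/N+\b\ell=\a\ell/N$; and the last term tends to $0$ by a Markov-type splitting (for any $\e>0$, the set $\{g\geq\e\}$ has measure $\leq \|g\|_{L^1}/\e$, contributing $O(1/r)$, while its complement contributes at most $\e^q/N$). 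Passing to the limit forces $\a\ell/N=0$, whence $\ell=0$.

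Finally, to derive $f'\to 0$, fix $\e>0$: since $f(r)\to 0$ and $\b>0$, there is $R_\e$ such that $\a f(r)<\b r\e/2$ for $r\geq R_\e$. Then at any $r\geq R_\e$ with $g(r)=\e$, the equation for $g^{p-1}$ yields $(g^{p-1})'(r)\leq \a f(r)-\b r\e-\e^q<0$, so $g^{p-1}$ cannot cross the level $\e^{p-1}$ upward past $R_\e$. Since $g\in L^1(0,\infty)$, there must exist $r^*\geq R_\e$ with $g(r^*)<\e$ (otherwise $g\geq\e$ on a tail of infinite measure), and the trapping then forces $g(r)<\e$ for all $r\geq r^*$; as $\e$ was arbitrary, $g(r)\to 0$. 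The main obstacle I anticipate is the third paragraph: one really needs the identity obtained by integration by parts, combined with both the boundedness of $g$ (Step~2) and its $L^1$-integrability, for the Markov estimate to close the argument, and the sign of $\a-N\b$ dictated by the constraint $q<q_*$ in \eqref{exp} is exactly what makes $\ell=0$ the forced conclusion.
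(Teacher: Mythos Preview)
Your proof is correct. Part~1 (negativity of $f'$) coincides with the paper's argument. For the bound on $|f'|$, the paper instead takes the minimum $r_m$ of $f'$ on a compact interval $[0,R]$: from $f''(r_m)\le 0$ and the ODE one reads off $|f'(r_m)|^q\le\a f(r_m)\le\a a$ in one line. Your upward-crossing barrier reaches the same bound and is equally elementary. For the limits when $R(a)=\infty$, the paper uses the energy $E(r)=\frac{p-1}{p}|f'|^p+\frac{\a}{2}f^2$: one computes $E'(r)=-\frac{N-1}{r}|f'|^p-\b r|f'|^2-|f'|^{q+1}<0$, so $E$ has a finite limit and $f'\in L^{q+1}(0,\infty)$; together these force $f'\to 0$, and then the equation for $F$ gives $F'(r)\to\a\ell$, ruling out $\ell>0$. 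Your route---integrated identity, Ces\`aro averaging on the $f$ integral, a Markov splitting on the $g^q$ integral, then a separate trapping argument for $g\to 0$---is more hands-on and needs the $L^1$ bound on $g$ as an intermediate ingredient, whereas the energy method delivers $f'\in L^{q+1}$ and $r|f'|^2\in L^1$ for free, information that is occasionally useful elsewhere. One small correction to your self-assessment: the sign of $\a-N\b$ is in fact irrelevant here, since $(\a-N\b)\ell/N+\b\ell$ collapses to $\a\ell/N$ regardless; it is simply $\a>0$ that forces $\ell=0$, and neither your argument nor the paper's actually uses the constraint $q<q_*$ in this lemma.
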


\begin{proof}
It readily follows from \eqref{phl1} that $F'(0)=\a a/N>0$ so that there is 
$\delta>0$ such that $f'(r)<0$ for $r\in (0,\delta)$. Introducing $r_0:=\inf{\{ r\in (0,R(a))\ :\ f'(r)=0\}}$, 
we assume for contradiction that $r_0<R(a)$. Then, on the one hand, $F(r_0)=f'(r_0)=0$ and we deduce from \eqref{phl1} that $F'(r_0)=\a f(r_0)>0$. On the other hand, $F(r)\ge 0=F(r_0)$ for $r\in (0,r_0)$ which implies that $F'(r_0)\le 0$, whence a contradiction. Consequently, $r_0\ge R(a)$ and $f'<0$ in $(0,R(a))$. Consider now $R\in (0,R(a))$ and let $r_m$ be a point of minimum of $f'$ in $[0,R]$. Clearly, $r_m>0$ and either $r_m\in (0,R)$ and $f''(r_m)= 0$ or $r_m=R$ and $f''(r_m)\le 0$. In both cases, it follows from \eqref{IVPa} that $|f'(r_m)|^q - \a f(r_m)\le 0$, whence 
$$
|f'(r)|^q \le |f'(r_m)|^q \le \a f(r_m) \le \a f(0) =\a a\,, \quad r\in [0,R]\,.
$$

Assume now that $R(a)=\infty$. Since $f$ is decreasing, there exists
$l:=\lim\limits_{r\to\infty}f(r)\geq0$. Defining the following energy:
\begin{equation*}
E(r):=\frac{p-1}{p}|f'(r)|^p+\frac{\a}{2}f(r)^2, \quad r\ge 0,
\end{equation*}
it follows from \eqref{IVPa} that
\begin{eqnarray}
E'(r) & = & (p-1)(|f'|^{p-2}f'f'')(r)+\a (ff')(r) \nonumber\\
& = & -\frac{N-1}{r}|f'(r)|^p-\b r|f'(r)|^2-|f'(r)|^{q+1}<0. \label{phl3}
\end{eqnarray}
Hence $E$ is decreasing and positive in $(0,\infty)$, and has thus a limit as $r\to\infty$. This property ensures that $f'$ has a limit as $r\to\infty$ while \eqref{phl3} implies that $f'\in L^{q+1}(0,\infty)$. Combining these two facts entails that $f'(r)\longrightarrow 0$ as $r\to\infty$ and so does $F$. It then follows from \eqref{phl2} that $F'(r)\longrightarrow \a l$ as $r\to\infty$. Assume for contradiction that $l>0$. Then $F'(r)\ge \a l/2$ for $r$ large enough whence $F(r)\ge \a l r/4$ for $r$ large enough. Therefore $F(r)\longrightarrow \infty$ as $r\to\infty$ and a contradiction. We have thus shown that $l=0$.
\end{proof}

Another useful result is the following expansion of $f(.;a)$ near the origin:

\begin{lemma}\label{lemma.expansion}
Let $a>0$. There exist three positive constants $C_1$, $C_2$, and $C_3$ such
that, as $r\to 0$, 
\begin{equation}\label{expansion}
\begin{split}
f(r;a)& =a - C_1\left(\frac{a\a}{N}\right)^{1/(p-1)}r^{p/(p-1)}+C_2\left(\frac{a\a}{N}\right)^{(q-p+2)/(p-1)}r^{(p+q)/(p-1)}\\
& + C_3\left(\frac{a\a}{N}\right)^{(3-p)/(p-1)}r^{2p/(p-1)}+o(r^{2p/(p-1)}),
\end{split}
\end{equation}
and
\begin{equation}\label{phl4}
\begin{split}
\partial_a f(r;a) & =1- \frac{C_1}{(p-1)a}\left(\frac{a \a}{N}\right)^{1/(p-1)} r^{p/(p-1)} \\
& + \frac{(2+q-p) C_2}{(p-1) a} \left(\frac{a\a}{N}\right)^{(q-p+2)/(p-1)} r^{(p+q)/(p-1)}+o(r^{(p+q)/(p-1)}).
\end{split}
\end{equation}
\end{lemma}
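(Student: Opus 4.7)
The strategy is to recast \eqref{IVPa} as an integral equation for $|f'|^{p-1}$ and then carry out a two-step bootstrap, the computations being streamlined by an identity on the exponents $\a$ and $\b$ defined in \eqref{VSSS}.

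Multiplying \eqref{IVPa} by $r^{N-1}$ and integrating over $(0,r)$ (using $f'(0)=0$ and the sign $f'<0$ on $(0,R(a))$ from Lemma~\ref{lemma.basic}) yields
\begin{equation*}
r^{N-1}\,|f'(r)|^{p-1}=\int_0^r s^{N-1}\,\bigl[\a f(s)+\b s f'(s)-|f'(s)|^q\bigr]\,\rd s.
\end{equation*}
Writing $A:=a\a/N$, continuity of $f$ at $r=0$ gives at once $|f'(r)|^{p-1}=Ar+o(r)$, hence $|f'(r)|=A^{1/(p-1)}r^{1/(p-1)}(1+o(1))$; one further integration then produces the leading term of \eqref{expansion} with $C_1=(p-1)/p$.

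Re-inserting this leading behaviour in the integrand, the $O(s^{p/(p-1)})$ contributions of $\a f(s)$ and $\b s f'(s)$ combine with coefficient $\a(p-1)/p+\b$, which is simplified by the identity
\begin{equation*}
\a\,\frac{p-1}{p}+\b=\frac{q}{p(2q-p)},
\end{equation*}
obtained by direct computation from \eqref{VSSS}. The absorption term $|f'(s)|^q$ brings in a contribution of order $s^{q/(p-1)}$; since \eqref{exp} forces $q<q_\ast<p$, we have $q/(p-1)<p/(p-1)$, so this is actually the \emph{leading} correction in the integrand. After integration and division by $r^{N-1}$ one obtains
\begin{equation*}
|f'(r)|^{p-1}=Ar\,\bigl[1-D_1 A^{(2-p)/(p-1)}\,r^{p/(p-1)}-D_2 A^{(q-p+1)/(p-1)}\,r^{q/(p-1)}+o\bigl(r^{p/(p-1)}\bigr)\bigr]
\end{equation*}
with explicit $D_1,D_2>0$. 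Raising this to the $1/(p-1)$-th power by means of $(1+x)^{1/(p-1)}=1+x/(p-1)+O(x^2)$ and integrating once more, while observing that $(p+1)/(p-1)+1=2p/(p-1)$ and $(q+1)/(p-1)+1=(p+q)/(p-1)$ and matching the powers of $A$, yields \eqref{expansion} with $C_1,C_2,C_3$ explicit and positive. The quadratic terms from the Taylor expansion of $(1+x)^{1/(p-1)}$ contribute at orders $\ge r^{(p+2q)/(p-1)}$, which is $o(r^{2p/(p-1)})$ because $q>p/2$.

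Concerning \eqref{phl4}, the right-hand side of \eqref{phl1} is locally Lipschitz in $(f,F)$ while the initial datum depends smoothly on $a$, so classical ODE theory gives $a\mapsto f(\cdot\,;a)\in C^1$; the bootstrap above is moreover uniform for $a$ in compact subsets of $(0,\infty)$ and delivers coefficients and remainder smooth in $a$, so termwise differentiation in $a$ is legitimate. Using $\partial_a[(a\a/N)^\gamma]=\gamma(a\a/N)^\gamma/a$ one immediately recovers \eqref{phl4} from \eqref{expansion}. The main obstacle is the careful bookkeeping of orders in the bootstrap: because $q<p$, the gradient absorption produces an \emph{intermediate} correction at order $r^{(p+q)/(p-1)}$, strictly between the leading term and the classical second-order $p$-Laplacian correction at $r^{2p/(p-1)}$. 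Identifying this intermediate term, checking that no higher-order contribution interferes at these scales, and exploiting the identity on $\a(p-1)/p+\b$ constitute the technical core of the argument.
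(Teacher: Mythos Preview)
Your argument for \eqref{expansion} is essentially the paper's: both recast \eqref{IVPa} in the integrated form $r^{N-1}|f'|^{p-1}=\int_0^r s^{N-1}[\a f+\b sf'-|f'|^q]\,\rd s$ and bootstrap twice, the key observation being that the absorption term produces the intermediate order $r^{(p+q)/(p-1)}$ between $r^{p/(p-1)}$ and $r^{2p/(p-1)}$ (the paper records the same constants $C_1=(p-1)/p$, $C_2=(p-1)/[(p+q)(q+N(p-1))]$, etc.).

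The treatment of \eqref{phl4} differs. The paper differentiates the equation \eqref{phl7} with respect to $a$, introduces an integrating factor, and reruns the bootstrap directly on the resulting linear equation for $\partial_a f$. You instead differentiate the expansion \eqref{expansion} termwise, invoking smooth dependence on $a$. Your route is shorter and correct in spirit, but the justification you give is too compressed: uniformity in $a$ of the $o(r^{2p/(p-1)})$ remainder does \emph{not} by itself entail that its $a$-derivative is $o(r^{(p+q)/(p-1)})$. What makes this work is that the remainder is given by an explicit iterated integral whose integrand depends smoothly on $(f,f')$ and hence on $a$; differentiating under the integral sign and reusing the same size estimates closes the argument. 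Spelling this out amounts to the paper's computation in slightly different clothing, so the gain is mostly notational. The paper's version has the advantage of being self-contained at the ODE level and of making the leading behaviour of $\partial_a f'$ explicit, which is used later in Lemma~\ref{lemma.monot2}.
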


\begin{proof}
We start with the fact that $(|f'|^{p-2}f')'(0)=-(a\a/N)$, so that, as $r\to 0$, 
\begin{equation}
f'(r)=-\left( \frac{a\a}{N} \right)^{1/(p-1)} r^{1/(p-1)} + o(r^{1/(p-1)}).
\label{phl5}
\end{equation}
After integration, we obtain
\begin{equation}
f(r) = a-\frac{p-1}{p}\left(\frac{a\a}{N}\right)^{1/(p-1)}r^{p/(p-1)}+o(r^{p/(p-1)}).
\label{phl6}
\end{equation}
Since \eqref{IVPa} also reads
\begin{equation}
\frac{d}{dr} \left( r^{N-1} |f'(r)|^{p-2} f'(r) \right) = r^{N-1}\ \left( |f'(r)|^q - \b r f'(r) - \a f(r)\right)\,,
\label{phl7}
\end{equation}
we infer from \eqref{phl5} and \eqref{phl6} that, as $r\to 0$, 
$$
\frac{1}{r^{N-1}} \frac{d}{dr} \left( r^{N-1} |f'(r)|^{p-2} f'(r) \right) = -a \a + \left( \frac{a \a}{N} \right)^{q/(p-1)} r^{q/(p-1)} + o(r^{q/(p-1)}) .
$$
Integrating once, we obtain the following expansion for $f'$:
\begin{equation}
\begin{split}
f'(r) & = -\left( \frac{a\a}{N} \right)^{1/(p-1)} r^{1/(p-1)} \\
& + \frac{1}{q+N(p-1)} \left( \frac{a\a}{N} \right)^{(q-p+2)/(p-1)} r^{(q+1)/(p-1)} + o(r^{(q+1)/(p-1)}).
\end{split}
\label{phl8}
\end{equation}
Another integration gives that, as $r\to 0$, 
\begin{equation}
\begin{split}
f(r) & = a - \frac{p-1}{p} \left( \frac{a\a}{N} \right)^{1/(p-1)} r^{p/(p-1)} \\
& + \frac{p-1}{(p+q)(q+N(p-1))} \left( \frac{a\a}{N} \right)^{(q-p+2)/(p-1)} r^{(p+q)/(p-1)} + o(r^{(p+q)/(p-1)}).
\end{split}
\label{phl9}
\end{equation}

Inserting \eqref{phl8} and \eqref{phl9} in \eqref{phl7} then leads to \eqref{expansion} after lengthy but simple computations, 
with the constants
\begin{equation*}
C_1=\frac{p-1}{p}, \ C_2=\frac{p-1}{(p+q)(q+N(p-1))}, \
C_3=\frac{(p-1)q}{2p^2(p+N(p-1))(2q-p)}.
\end{equation*}

Concerning $\partial_a f$, we first observe that differentiating \eqref{phl7} with respect to $a$ gives
\begin{eqnarray}
(p-1) \frac{d}{dr} \left( r^{N-1} |f'(r)|^{p-2} \partial_a f'(r) \right) & = & q r^{N-1} |f'(r)|^{q-2} f'(r) \partial_a f'(r) \nonumber \\ 
& & - \b r^N \partial_a f'(r) - \a r^{N-1} \partial_a f(r)\,.
\label{phl10}
\end{eqnarray}
Setting $Y'(r):= q |f'(r)|^{q-p} f'(r) /(p-1)$ for $r\in [0,R(a))$ with $Y(0)=0$, we have
$$
\frac{d}{dr} \left( r^{N-1} |f'(r)|^{p-2} \partial_a f'(r) e^{-Y(r)} \right) = - \left[ \b r^N \partial_a f'(r) + \a r^{N-1} \partial_a f(r)\right]\ \frac{e^{-Y(r)}}{p-1}\,.
$$
We next use the properties $\partial_a f(0;a)=1$, $\partial_a f'(0;a)=0$, and $Y'(r)\to 0$ as $r\to 0$ (the latter being true since $q>p-1$) to conclude that 
$$
\frac{d}{dr} \left( r^{N-1} |f'(r)|^{p-2} \partial_a f'(r) e^{-Y(r)} \right) = - \frac{\a}{p-1} r^{N-1} + o(r^{N-1})
$$
as $r\to 0$. Integrating this identity, we infer from \eqref{expansion} that 
$$
\partial_a f'(r) = - \frac{\a}{N(p-1)} \left( \frac{a \a}{N} \right)^{(2-p)/(p-1)} r^{1/(p-1)} + o(r^{1/(p-1)}).
$$
Integrating once more with respect to $r$ gives an expansion of $\partial_a f(r)$ as $r\to 0$. Inserting these expansions in \eqref{phl10} and arguing as in the proof of \eqref{expansion} give \eqref{phl4} after some computations.
\end{proof}

We now prove that, if $a>0$ is such that $R(a)=\infty$, then $|f'(.;a)|$ is controlled by $f(.;a)^{2/p}$. To this end, we first check that $f(.;a)$ can be associated to a solution to \eqref{FDE} which turns out to be a viscosity solution in the sense of Definition~\ref{def:vs} below. This then allows us to apply the optimal gradient estimates obtained in \cite{IL11}. 

\begin{lemma}\label{le:solvs}
Let $a>0$ be such that $R(a)=\infty$ and $\e>0$. Setting 
$$
U_\e(t,x) := (t+\e)^{-\a} f(|x| (t+\e)^{-\b} ; a)\,, \quad (t,x)\in [0,\infty)\times \real^N\,,
$$
the function $U_\e$ is a viscosity solution to \eqref{FDE} with initial condition $U_\e(0)$ defined by $U_\e(0,x) := \e^{-\a} f(|x| \e^{-\b};a)$ for $x\in\real^N$.
\end{lemma}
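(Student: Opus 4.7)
The argument splits naturally into two regimes: verification of the equation classically away from the spatial origin, and verification of the viscosity inequalities at $x=0$ (where the $p$-Laplacian becomes singular since $p<2$).

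First, I would perform a direct chain-rule computation substituting $U_\e(t,x)=(t+\e)^{-\a}f(|x|(t+\e)^{-\b};a)$ into \eqref{FDE}. The algebra simplifies because the three scaling exponents $\alpha+1$, $(\a+\b)(p-1)+\b$, and $q(\a+\b)$ all coincide; this is precisely the calibration forced by the choice \eqref{VSSS}. After factoring out the common power $(t+\e)^{-\a-1}$, the PDE reduces to the ODE \eqref{ODE1} at $r=|x|(t+\e)^{-\b}$. For any $(t,x)$ with $t>0$ and $x\ne 0$ one has $r>0$, and Lemma~\ref{lemma.basic} together with the hypothesis $R(a)=\infty$ gives $f'(r)<0$, so $\nabla U_\e(t,x)\ne 0$ and the $p$-Laplacian is a bona fide uniformly elliptic operator in a neighborhood. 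Hence $U_\e$ is a classical—and therefore also viscosity—solution of \eqref{FDE} on $(0,\infty)\times(\real^N\setminus\{0\})$.

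Second, at a point $(t_0,0)$ with $t_0>0$, I would invoke the expansion \eqref{expansion} from Lemma~\ref{lemma.expansion}. Because $p<2$ implies $p/(p-1)>2$, the leading correction $|x|^{p/(p-1)}$ is of class $C^2$ at the origin with vanishing Hessian, so $U_\e$ is $C^2$ in space near $(t_0,0)$ with $\nabla U_\e(t_0,0)=0$ and $D^2_x U_\e(t_0,0)=0$. Moreover, an explicit computation on the leading radial profile yields the limit
\[
\lim_{x\to 0}\,(-\Delta_p U_\e)(t_0,x)=a\a\,(t_0+\e)^{-\a-1},
\]
which exactly matches $-\partial_t U_\e(t_0,0)$, and hence \eqref{FDE} is satisfied in the radial-limit sense at the origin. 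Any admissible $C^2$ test function $\varphi$ touching $U_\e$ at $(t_0,0)$ is then forced by the $C^2$-regularity of $U_\e$ to satisfy $\nabla\varphi(t_0,0)=0$ and $\partial_t\varphi(t_0,0)=\partial_t U_\e(t_0,0)$, and the viscosity sub/super inequalities follow by combining these pointwise equalities with the admissibility condition of Definition~\ref{def:vs} together with the above limit identity.

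Third, continuous attainment of the initial datum is automatic: $U_\e(t,x)\to\e^{-\a}f(|x|\e^{-\b};a)$ uniformly on compact subsets of $\real^N$ as $t\to 0^+$ by continuity of $f$.

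The principal obstacle is precisely the verification at $x=0$, since for $p<2$ the expression $|\nabla u|^{p-2}$ blows up as $\nabla u\to 0$, so the $p$-Laplacian is only meaningful there through the relaxed/admissible formulation encoded in Definition~\ref{def:vs}. Once that definition is unpacked in the standard way for singular parabolic $p$-Laplacian operators, the required inequalities reduce to the explicit radial computation together with the enhanced $C^2$-smoothness of $U_\e$ at the origin furnished by Lemma~\ref{lemma.expansion}; every other step is routine chain-rule manipulation.
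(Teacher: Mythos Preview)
Your treatment away from $x=0$ and of the subsolution inequality at $(t_0,0)$ is fine and matches the paper. The gap is in the supersolution check at $(t_0,0)$. In the Ohnuma--Sato/Ishii--Souganidis framework of Definition~\ref{def:vs}, the test inequality at a point where $\nabla\psi=0$ is simply $\partial_t\psi(t_0,0)\le 0$; the operator $\Delta_p$ does not enter at all, so your ``limit identity'' $\lim_{x\to 0}(-\Delta_p U_\e)(t_0,x)=-\partial_t U_\e(t_0,0)$ is not what is being verified. If an admissible $\psi\in\ca$ touches $-U_\e$ from above at $(t_0,0)$, your own pointwise equalities give
\[
\partial_t\psi(t_0,0)=-\partial_t U_\e(t_0,0)=\a a(t_0+\e)^{-\a-1}>0,
\]
which would \emph{violate} the required inequality $\partial_t\psi(t_0,0)\le 0$. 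So the direct verification you outline fails.

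The paper's resolution, which you are missing, is to show that this situation \emph{cannot occur}: no $\psi\in\ca$ can touch $-U_\e$ from above at $(t_0,0)$. Indeed, by \eqref{expansion},
\[
-U_\e(t_0,x)+U_\e(t_0,0)\;\sim\;C_1\Bigl(\frac{a\a}{N}\Bigr)^{1/(p-1)}(t_0+\e)^{-\a-\b p/(p-1)}\,|x|^{p/(p-1)}
\]
as $x\to 0$, with a \emph{positive} coefficient. If $\psi\ge -U_\e$ with equality at $(t_0,0)$ and $\nabla\psi(t_0,0)=0$, the admissibility condition on $\ca$ provides $\xi\in\Xi$ with $\xi(|x|)\ge \psi(t_0,x)-\psi(t_0,0)$, hence $\liminf_{r\to 0}\xi(r)/r^{p/(p-1)}>0$. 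But every $\xi\in\Xi$ satisfies $\xi(r)/r^{p/(p-1)}\to 0$ by \eqref{phl0}. This contradiction is the heart of the supersolution argument; once noted, the supersolution property at the origin is vacuous and the proof is complete.
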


\begin{proof}
Let us first observe that, owing to \eqref{phl5}-\eqref{phl7}, we have as $r\to 0$
\begin{equation}
(p-1) f''(r) = - \left( \frac{a \a}{N} \right)^{1/(p-1)} r^{(2-p)/(p-1)} + o(r^{(2-p)/(p-1)}).
\label{phl11}
\end{equation}
Since $p<2$ and $f$ clearly belongs to $C^2((0,\infty))$, we deduce from \eqref{phl11} that $f\in C^2([0,\infty))$ with $f''(0)=0$. Consequently, $U_\e$ belongs to $C^2([0,\infty)\times\real^N)$ and is clearly a classical solution to \eqref{FDE} in $(0,\infty)\times \real^N\setminus\{0\}$ where $\nabla U_\e$ does not vanish. Then $U_\e$ is a viscosity solution to \eqref{FDE} in $(0,\infty)\times \real^N\setminus\{0\}$ as it obviously satisfies Definition~\ref{def:vs} for all $(t_0,x_0)\in (0,\infty)\times \real^N\setminus\{0\}$. 

Consider now $t_0>0$ and $x_0=0$. We first prove that $U_\e$ is a viscosity subsolution. To this end, let $\psi\in\ca$ be such that $U_\e(t_0,0)=\psi(t_0,0)$ and $U_\e(t,x)<\psi(t,x)$ for any $(t,x)\in Q_{\infty}\setminus\{(t_0,0)\}$, the set $\ca$ of admissible comparison functions being defined in the Appendix. Since $U_\e$ and $\psi$ are both $C^1$-smooth, this property implies that $\nabla\psi(t_0,0)=\nabla U_\e(t_0,0)=0$ and $\partial_t\psi(t_0,0)=\partial_t U_\e(t_0,0)=-\a a (t_0+\e)^{\a+\b}<0$, so that Definition~\ref{def:vs} is satisfied.

We next prove that $U_\e$ is a viscosity supersolution, that is, $-U_\e$ is
a viscosity subsolution. Let $\psi\in\ca$ be such that $-U_\e(t_0,0)=\psi(t_0,0)$ and $-U_\e(t,x)<\psi(t,x)$
for any $(t,x)\in Q_{\infty}\setminus\{(t_0,0)\}$. Since $U_\e$ and $\psi$ are both $C^1$-smooth, we have
$\nabla\psi(t_0,0)=\nabla U_\e(t_0,0)=0$ and $\partial_t\psi(t_0,0)=-\partial_t U_\e(t_0,0)$. Therefore, for $x\in\real^N$,
$$
\psi(t_0,x)-\psi(t_0,0) \ge -U_\e(t_0,x) + U_\e(t_0,0) = - (t_0+\e)^{-\a} \left[ f(|x|(t_0+\e)^{-\b};a) - f(0;a) \right]\,,
$$
and it follows from \eqref{expansion} that, as $x\to 0$, 
$$
\psi(t_0,x)-\psi(t_0,0) \ge C_1 (t_0+\e)^{-\a} \left( \frac{a \a}{N} \right)^{1/(p-1)} \left( |x|(t_0+\e)^{-\b} \right)^{p/(p-1)} + o(|x|^{p/(p-1)})\,.
$$
However, since $\psi\in\ca$, there is $\xi\in\Xi$ such that, as $x\to 0$, $\xi(|x|)\ge \psi(t_0,x)-\psi(t_0,0)$. Combining the above two inequalities leads us to 
$$
\liminf_{r\to 0} \frac{\xi(r)}{r^{p/(p-1)}} > 0 \,,
$$
which contradicts \eqref{phl0}. This situation thus cannot occur and the proof is complete.
\end{proof}

As a consequence of Lemma~\ref{le:solvs} and since $p>p_c$, we may apply the gradient estimates proved for solutions to \eqref{FDE} in \cite[Theorems~1.2 and~1.3]{IL11}.

\begin{proposition}\label{prop.gradient}
Let $a>0$ be such that $R(a)=\infty$. Then $f(.;a)$ satisfies
\begin{equation}\label{phl12}
\left| f'(r;a)\right| \le C_4\ f(r;a)^{2/p}\,, \quad r\ge 0\,,
\end{equation}
for some constant $C_4$ depending only on $N$,  $p$, $q$ and $a$.
\end{proposition}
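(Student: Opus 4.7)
The plan is to translate the desired pointwise bound on $f$ into a gradient estimate for an associated viscosity solution of \eqref{FDE} and then invoke \cite[Theorems~1.2 and~1.3]{IL11}. Since Lemma~\ref{le:solvs} already identifies $U_\e$ as such a viscosity solution for every $\e>0$, only this translation has to be carried out, and the main nontrivial ingredient is taken care of by the cited result.

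Fix $\e=1$ for definiteness. By Lemma~\ref{le:solvs}, $U_1$ is a nonnegative viscosity solution of \eqref{FDE} on $[0,\infty)\times\real^N$ whose initial datum is bounded with $\|U_1(0,\cdot)\|_\infty = a$. Under the standing assumption \eqref{exp}, and in particular $p>p_c$, the sharp gradient estimates of \cite{IL11} apply to $U_1$ and provide a time-independent pointwise bound of the form
\[
|\nabla U_1(t,x)| \le K\, U_1(t,x)^{2/p}, \qquad (t,x)\in Q_\infty,
\]
with a constant $K = K(N,p,q,a)$ depending only on the data and on the sup-norm of the initial condition.

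Using the explicit self-similar form $U_1(t,x)=(t+1)^{-\alpha} f(|x|(t+1)^{-\beta};a)$, one reads off that $|\nabla U_1(0,x)| = |f'(|x|;a)|$ and $U_1(0,x)^{2/p} = f(|x|;a)^{2/p}$, so evaluating the inequality above at $t=0$ yields \eqref{phl12} with $C_4:=K$. The only delicate point is the gradient estimate from \cite{IL11} itself, whose availability rests both on the viscosity solution character of $U_1$, supplied by Lemma~\ref{le:solvs}, and on the condition $p>p_c$ coming from \eqref{exp}; once these are in place, the transfer to $f$ is immediate. Scale-invariance of the relation $|f'|\lesssim f^{2/p}$ (a short check gives that the powers of $(t+1)$ balance, namely $\alpha+\beta=2\alpha/p + 1/p$) confirms that the same bound would be obtained for any other choice of $\e$, up to an $\e$-dependent factor in the constant.
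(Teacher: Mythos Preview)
Your overall strategy matches the paper's: apply the gradient estimates of \cite{IL11} to the viscosity solution supplied by Lemma~\ref{le:solvs} and translate back to $f$. The gap is in how you quote \cite{IL11}. Theorems~1.2 and~1.3 there are smoothing estimates, namely
\[
\left|\nabla U_\e^{-(2-p)/p}(t,x)\right| \le C\, t^{-1/p}\quad (q\ge 1),\qquad
\left|\nabla U_\e^{-(2-p)/p}(t,x)\right| \le C\left(\|U_\e(t/2)\|_\infty^{1/\alpha p}+t^{-1/p}\right)\quad (q<1),
\]
so the right-hand side blows up as $t\to 0^+$. There is no time-independent bound of the shape $|\nabla U_1|\le K\,U_1^{2/p}$ to evaluate at $t=0$; doing so yields a vacuous inequality.

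The fix is immediate and is exactly what the paper does. Writing the estimate in terms of $f$ (using the scaling identity $\alpha+\beta=2\alpha/p+1/p$ you correctly noted) gives
\[
|f'(r)| \le C\left(\frac{t+\e}{t}\right)^{1/p} f(r)^{2/p}\qquad\text{for all } r\ge 0,
\]
with the obvious modification for $q<1$. One must therefore evaluate at some fixed $t>0$; the paper takes $t=1$ and then lets $\e\to 0$ to obtain the constant $C$. With your choice $\e=1$, taking $t=1$ would already give $C_4=2^{1/p}C$ (respectively $C_4=C\,2^{1/p}(a^{1/\alpha p}3^{-1/p}+1)$ for $q<1$), which suffices for \eqref{phl12}. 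So the limit $\e\to 0$ is cosmetic, but evaluating at a strictly positive time is essential.
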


\begin{proof}
Fix $\e>0$. According to Lemma~\ref{le:solvs}, the function $U_\e$ defined by $U_\e(t,x) = (t+\e)^{-\a} f(|x| (t+\e)^{-\b} ; a)$ for $(t,x)\in [0,\infty)\times \real^N$ is a viscosity solution to \eqref{FDE} with a nonnegative initial condition which belongs to $W^{1,\infty}(\real^N)$ by Lemma~\ref{lemma.basic}. 

If $q\ge 1$, we infer from \cite[Theorem~1.2]{IL11} that there is a positive constant $C$ depending only on $N$ and $p$ such that 
$$
\left| \nabla U_\e^{-(2-p)/p}(t,x) \right| \le C\ t^{-1/p}\,, \quad (t,x)\in Q_\infty.
$$
In terms of $f(.;a)$, we obtain
$$
\left| f'(|x| (t+\e)^{-\b}) \right| \le C\ \left( \frac{t+\e}{t} \right)^{1/p}\ f(|x| (t+\e)^{-\b})^{2/p}\,, \quad (t,x)\in Q_\infty.
$$
Letting $\e\to 0$ and choosing $t=1$ give \eqref{phl12}.

If $q\in (p/2,1)$, we infer from \cite[Theorem~1.3]{IL11} that there is a positive constant $C$ depending only on $N$, $p$, and $q$ such that 
$$
\left| \nabla U_\e^{-(2-p)/p}(t,x) \right| \le C\ \left( \left\| U_\e(t/2) \right\|_\infty^{1/\a p} + t^{-1/p} \right)\,, \quad (t,x)\in Q_\infty.
$$
In terms of $f(.;a)$, we obtain
$$
\left| f'(|x| (t+\e)^{-\b}) \right| \le C\ (t+\e)^{1/p}\ \left( a^{1/\a p} (t+2\e)^{-1/p} + t^{-1/p} \right)\ f(|x| (t+\e)^{-\b})^{2/p}
$$
for $(t,x)\in Q_\infty$. Letting $\e\to 0$ and choosing $t=1$ give \eqref{phl12}.
\end{proof}

\subsection{Decay rates and monotonicity}\label{sec:drcf}

We now study the possible decay rates as $r\to\infty$ of the profile
$f(.;a)$ (when $R(a)=\infty$). Since we expect an algebraic decay, we make the following
ansatz:
\begin{equation*}
f(r;a)\sim C r^{1-\gamma}, \ f'(r;a)\sim C (1-\gamma) r^{-\gamma}, \quad
\hbox{as} \ r\to\infty,
\end{equation*}
for some $\gamma>1$. Inserting this ansatz in \eqref{IVPa}, we easily
find that there are only two possibilities: $\gamma=2/(2-p)$ and
$\gamma=1/(q-p+1)$. In the former case, $f(r;a)\sim C r^{-p/(2-p)}$ as $r\to \infty$,
while $f(r;a)\sim C r^{-(p-q)/(q-p+1)}$ as $r\to\infty$ in the latter. Observing that 
$(p-q)/(q-p+1)<p/(2-p)$ as $q>p/2$, the solutions to \eqref{IVPa} decaying with the first rate are called \emph{fast orbits} while those decaying with the second one are called \emph{slow orbits} \cite{BPT}. Complying with \eqref{cond2VSS} requires $f$ to be a fast orbit, and we now proceed to show the existence of such solutions to \eqref{IVPa}. 

To this end, following \cite{CQW03, Pe042}, we introduce the new unknown function
\begin{equation*}
w(r)=w(r;a) := r^{p/(2-p)} f(r;a), \quad (r,a)\in [0,R(a))\times (0,\infty),
\end{equation*}
which is a solution of the following differential equation: 
\begin{equation}\label{ODE2}
\begin{split}
& (p-1)r^2w''(r)+[N-1-2\mu(p-1)]rw'(r)+\mu(\mu-N)w(r)\\
& \quad +|rw'(r)-\mu w(r)|^{2-p}\left[(\a-\b\mu)w(r)+\b rw'(r)-r^{\eta}|rw'(r)-\mu w(r)|^q\right]=0
\end{split}
\end{equation}
for $r\in [0,R(a))$, where
$$
\mu:=\frac{p}{2-p}>N, \quad \eta:=-\frac{2q-p}{2-p}<0, \quad
\a-\b\mu=-\frac{1}{2-p}<0.
$$
We end the preliminary results with the following monotonicity property with
respect to the initial value $a$ which will be very useful in the uniqueness proof later on.
We define a linear differential operator $L_a$ by:
\begin{equation}\label{linearized}
\begin{split}
L_a(\varphi)(r)&:=(p-1)r^2\varphi''(r)+[N-1-2\mu(p-1)]r\varphi'(r)+\mu(\mu-N)\varphi(r)\\
&+(2-p)(|W|^{-p} W)(r) (r\varphi'(r)-\mu
\varphi(r))\left[(\a-\b\mu)w(r)+\b rw'(r)-r^{\eta}|W(r)|^{q}\right]\\
&+|W(r)|^{2-p}\left[(\a-\b\mu)\varphi(r)+\b
r\varphi'(r)-qr^{\eta} (|W|^{q-2} W)(r) (r\varphi'(r)-\mu
\varphi(r))\right],
\end{split}
\end{equation}
where $W(r):=rw'(r)-\mu w(r)$ and $w(r)=w(r;a)$ for $r\in [0,R(a))$ in \eqref{linearized}. Before stating the monotonicity lemma, we gather some properties of $L_a$ in the following two results.

\begin{lemma}\label{lemma.monot2}
Given $a>0$, we have 
\begin{equation}
L_a(\partial_a w(.;a))=0 \;\;\mbox{ and }\;\; L_a(rw'(.;a))<0 \;\;\mbox{ in }\;\;  (0,R(a))\,.
\label{phl13}
\end{equation} 
Moreover, for $l\in (0,\mu a]$, there exists a small interval $(0,s_l(a))$ such that
\begin{eqnarray*}
\mu aw_a(r;a) & > & rw'(r;a) \;\; \hbox{for} \;\; r\in (0,s_{\mu a}(a))\,, \\
lw_a(r;a) & < & rw'(r;a) \;\; \hbox{for} \;\; l\in (0,\mu a) \;\;\mbox{ and }\;\; r\in(0,s_l(a)).
\end{eqnarray*}
\end{lemma}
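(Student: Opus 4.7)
My approach is to recognize $L_a$ as the G\^ateaux derivative at $w=w(\cdot;a)$ of the nonlinear operator $\Phi$ whose vanishing constitutes the ODE \eqref{ODE2}. A direct computation of $\frac{d}{d\varepsilon}\Phi[w+\varepsilon\psi](r)\bigr|_{\varepsilon=0}$ reproduces \eqref{linearized} term by term: the piece involving $(2-p)|W|^{-p}W(r\psi'-\mu\psi)$ comes from the variation of $|W|^{2-p}$, the piece involving $qr^\eta|W|^{q-2}W(r\psi'-\mu\psi)$ from the variation of $|W|^q$, and the remaining contributions from the linear terms. Once this identification is in place, the equation $L_a(\partial_a w(\cdot;a))=0$ is immediate: $w(\cdot;a)$ satisfies $\Phi[w(\cdot;a)]\equiv 0$ for every admissible $a$, and differentiating in $a$ yields the claim.

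For the strict inequality $L_a(rw')<0$, I exploit the fact that $rw'$ is the infinitesimal generator of the scaling $w\mapsto w_\lambda:=w(\lambda\,\cdot)$, i.e.\ $rw'(r)=\frac{d}{d\lambda}w_\lambda(r)\bigr|_{\lambda=1}$. Every term in $\Phi$ transforms into the corresponding term of $\Phi[w](\lambda r)$ under $w\mapsto w_\lambda$ except for the factor $r^\eta$, which stays as $r^\eta$ instead of becoming $(\lambda r)^\eta$. Setting $W:=rw'-\mu w$ and using $\Phi[w]\equiv 0$, this gives
\[
\Phi[w_\lambda](r)=\bigl[(\lambda r)^\eta-r^\eta\bigr]\,|W(\lambda r)|^{q+2-p}.
\]
Differentiating at $\lambda=1$, only the vanishing prefactor contributes through its $\lambda$-derivative, producing $L_a(rw')(r)=\eta\,r^\eta\,|W(r)|^{q+2-p}$. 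Since $\eta<0$ (which is exactly the exponent assumption $q>p/2$) and $W(r)=r^{\mu+1}f'(r;a)<0$ on $(0,R(a))$ by Lemma~\ref{lemma.basic}, the right-hand side is strictly negative on $(0,R(a))$.

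For the near-origin comparisons I use $w=r^\mu f$ to rewrite
\[
l\,w_a(r;a)-rw'(r;a)=r^\mu\bigl[\,l\,\partial_a f(r;a)-\mu f(r;a)-rf'(r;a)\,\bigr].
\]
By Lemma~\ref{lemma.expansion} the bracket has the expansion $(l-\mu a)+O(r^{p/(p-1)})$ as $r\to 0$. For $l\in(0,\mu a)$ the constant $l-\mu a$ is strictly negative and dominates the error, so $lw_a<rw'$ on a small right-neighbourhood of $0$, which provides the required $s_l(a)$.

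The delicate case is $l=\mu a$: there the constant term vanishes, and plugging the expansions \eqref{expansion}, \eqref{phl4} and \eqref{phl8} into $\mu(a\partial_a f-f)-rf'$ and using the identities $\mu(2-p)=p$ together with $C_1=(p-1)/p$, one checks that the coefficient of $r^{p/(p-1)}$ also cancels identically. I must therefore carry the expansions to the next order $r^{(p+q)/(p-1)}$; using the explicit value $C_2=(p-1)/[(p+q)(q+N(p-1))]$, the $r^{(p+q)/(p-1)}$-coefficient collapses, after algebraic simplification, to
\[
\frac{(p-1)(2q-p)}{(2-p)(p+q)(q+N(p-1))}\,(a\alpha/N)^{(q-p+2)/(p-1)},
\]
which is strictly positive thanks to $q>p/2$. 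This yields $\mu a\,w_a>rw'$ on a right-neighbourhood of $0$, giving $s_{\mu a}(a)$. The main obstacle is thus this double cancellation at the borderline $l=\mu a$ and the need to push the expansions of $f$, $f'$ and $\partial_a f$ to sufficient order, which is precisely why Lemma~\ref{lemma.expansion} is stated with three explicit terms.
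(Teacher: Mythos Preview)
Your proof is correct and follows essentially the same route as the paper's: the identities $L_a(\partial_a w)=0$ and $L_a(rw')=\eta r^\eta|W|^{q+2-p}<0$ are obtained in the paper by differentiating \eqref{ODE2} in $a$ and in $r$ (then multiplying by $r$), which is exactly what your G\^ateaux-derivative and scaling viewpoints encode; and for the near-origin comparisons both you and the paper reduce $l\,w_a-rw'$ to $r^\mu(l\,\partial_a f-\mu f-rf')$ and invoke the expansions of Lemma~\ref{lemma.expansion}, noting the double cancellation at $l=\mu a$ and the positive $r^{(p+q)/(p-1)}$-coefficient coming from $2q>p$. Your scaling interpretation is a nice conceptual gloss on why only the $r^\eta$ term survives, but the argument is the same.
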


\begin{proof}
To simplify notations, we set $w_a:= \partial_a w(.;a)$ and $f_a:= \partial_a f(.;a)$. Differentiating the ordinary differential equation \eqref{ODE2} with respect to the parameter $a$, we readily obtain that $L_a(w_a)=0$ in $(0,R(a))$. Next, differentiating \eqref{ODE2} with respect to $r$ and multiplying the resulting identity by $r$ give, by a straightforward computation
$$
L_a(r w')=\eta r^{\eta}|rw'(r)-\mu w(r)|^{q-p+2}<0 \ \hbox{in} \ (0,R(a)).
$$
We now prove the last two assertions and consider $l\in (0,\mu a]$. Since $w_a(r)=r^{\mu}f_a(r)$ and
$rw'(r)=r^{\mu}(rf'(r)+\mu f(r))$, we have $lw_a(r)-r w'(r)= r^\mu \left( l f_a(r)-r f'(r)-\mu f(r) \right)$ and use Lemma~\ref{lemma.expansion} to identify the behavior of this function in a right-neighborhood of $r=0$. Indeed, we infer from Lemma~\ref{lemma.expansion}, \eqref{phl8}, and the definitions of $C_1$ and $C_2$ that, as $r\to 0$,
\begin{eqnarray*}
& & l f_a(r)-r f'(r)-\mu f(r) \\
& = & l - \mu a - C_1 \left(\frac{a \a}{N}\right)^{1/(p-1)} \left[ \frac{l}{(p-1) a} - \mu - \frac{p}{p-1}\right]\ r^{p/(p-1)} \\
\quad \quad & + & C_2 \left(\frac{a\a}{N}\right)^{(q-p+2)/(p-1)} \left[ \frac{(2+q-p) l}{(p-1) a} - \frac{(p+q)}{p-1} - \mu \right]\ r^{(p+q)/(p-1)} \\
\quad \quad & + & o(r^{(p+q)/(p-1)})\,. 
\end{eqnarray*}
On the one hand, if $l<\mu a$, we have $l f_a(r)-r f'(r)-\mu f(r)\sim l - \mu a <0$ as $r\to 0$, which implies that this function is negative in $(0,s_l(a))$ for some sufficiently small $s_l(a)>0$. On the other hand, if $l=\mu a$, we have
$$
l-\mu a = 0\,, \quad  \frac{l}{(p-1) a} - \mu - \frac{p}{p-1} = \frac{\mu}{(p-1)} - \mu - \frac{p}{p-1} = 0
$$
and
$$
\frac{(2+q-p) l}{(p-1) a} - \frac{(p+q)}{p-1} - \mu =  \frac{\mu(2+q-p)}{p-1} - \frac{p+q}{p-1} - \mu = \frac{(2q-p)(p-1)}{2-p}>0,
$$
hence $\mu a f_a(r)-rf'(r)-\mu f(r)>0$ for $r>0$ sufficiently small. 
\end{proof}

\begin{lemma}[Comparison principle]\label{lemma.comparison}
Let $a>0$, $r_1\in (0,R(a))$, and $r_2\in(r_1,R(a))$ and assume that $w'(.;a)>0$ in
$[r_1,r_2]$. Then, for any function $h\in C^2(r_1,r_2)$ satisfying
$h(r_1)=h(r_2)=0$ and $L_a(h)\geq 0$ in $(r_1,r_2)$, we have $h\le 0$ in $(r_1,r_2)$.
\end{lemma}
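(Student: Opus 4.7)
The plan is to prove the lemma by a change-of-variable (ratio) trick that turns $L_a$ into an operator amenable to the weak maximum principle, using the supersolution $\phi_0(r):=rw'(r;a)$ provided by Lemma~\ref{lemma.monot2}.

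First I observe that, as a differential operator on $\varphi$, $L_a$ has the form
$$
L_a\varphi \;=\; A(r)\,\varphi''(r) + B(r)\,\varphi'(r) + C(r)\,\varphi(r),
$$
where the leading coefficient is $A(r)=(p-1)r^2>0$ on $(r_1,r_2)$, and $B,C$ are continuous functions on $[r_1,r_2]$ whose explicit form (readable off \eqref{linearized}) will be irrelevant. The crucial structural input is Lemma~\ref{lemma.monot2}, which gives $L_a\phi_0<0$ on $(0,R(a))$, and the hypothesis $w'(.;a)>0$ on $[r_1,r_2]$, which (together with $r_1>0$) guarantees $\phi_0>0$ on $[r_1,r_2]$.

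Next I introduce the quotient $\psi:=h/\phi_0\in C^2(r_1,r_2)$. Since $h(r_1)=h(r_2)=0$, we have $\psi(r_1)=\psi(r_2)=0$. Writing $h=\psi\phi_0$ and expanding yields
$$
L_a h \;=\; \psi\,L_a\phi_0 \;+\; \psi'\bigl(2A\phi_0'+B\phi_0\bigr) \;+\; A\,\phi_0\,\psi''.
$$
The point of this identity is that at any interior critical point of $\psi$ the middle term drops out, leaving only two terms whose signs I can control.

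I argue by contradiction: suppose $\max_{[r_1,r_2]}\psi>0$. Since $\psi$ vanishes at the endpoints, the maximum is attained at some interior point $r_*\in(r_1,r_2)$ where $\psi(r_*)>0$, $\psi'(r_*)=0$, and $\psi''(r_*)\leq 0$. Evaluating the identity above at $r_*$ gives
$$
L_a h(r_*) \;=\; \psi(r_*)\,L_a\phi_0(r_*) \;+\; A(r_*)\,\phi_0(r_*)\,\psi''(r_*).
$$
The first term is strictly negative, as $\psi(r_*)>0$ and $L_a\phi_0(r_*)<0$, while the second term is non-positive, as $A(r_*)=(p-1)r_*^2>0$, $\phi_0(r_*)>0$, and $\psi''(r_*)\leq 0$. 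Hence $L_a h(r_*)<0$, contradicting the assumption $L_a h\geq 0$ in $(r_1,r_2)$. Thus $\psi\leq 0$ on $[r_1,r_2]$, and since $\phi_0>0$ there, $h=\psi\phi_0\leq 0$ on $(r_1,r_2)$, as claimed.

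I do not foresee any serious obstacle; the only non-routine ingredient is the availability of a strict subsolution $\phi_0$ that is positive on $[r_1,r_2]$, which is precisely what Lemma~\ref{lemma.monot2} supplies together with the hypothesis $w'(.;a)>0$. The positivity of the leading coefficient $A(r)=(p-1)r^2$, which holds throughout $(r_1,r_2)$ because $r_1>0$ and $p>1$, is also essential and explains why the argument breaks down without the hypothesis $r_1>0$.
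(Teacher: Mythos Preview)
Your proof is correct and is precisely the argument the paper invokes by citation: the paper refers to \cite[p.~48]{BNV} and \cite[Corollary~3.2]{GT01}, observing that the key ingredient is a positive function $\varphi=rw'(\cdot;a)$ with $L_a(\varphi)\le 0$, and you have simply written out the standard ratio-and-maximum-point computation that those references contain. One minor terminological slip: in your last paragraph you call $\phi_0$ a ``strict subsolution'', but with the sign convention in force ($L_a h\ge 0$ for the function to be bounded above) the correct word is \emph{supersolution}; this does not affect the argument.
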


\begin{proof}
Lemma~\ref{lemma.comparison} is a variant of the classical maximum principle \cite[Corollary~3.2]{GT01} and is proved in
\cite[p.~48]{BNV}. The proof relies on the existence of a function $\varphi>0$ on $[r_1,r_2]$ such that $L_a(\varphi)\le 0$. Here this is satisfied by $\varphi(r)=rw'(r;a)$ in view of Lemma~\ref{lemma.monot2} and the assumption on $w'(.;a)$.
\end{proof}
We are now in a position to state and prove the monotonicity result
with respect to $a$.

\begin{lemma}[Monotonicity lemma]\label{monotonicity}
Let $a>0$ and suppose that $w'(.;a)>0$ in a finite interval $(0,r_0)$ with $r_0<R(a)$.
Then we have $\mu a \partial_a w(r;a)>rw'(r;a)$ for $r\in (0,r_0)$ and $\partial_a w(.;a)>0$ on $[0,r_0]$.
\end{lemma}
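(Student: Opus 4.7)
\emph{Proof strategy.} I would introduce
\[
\Phi(r) := \mu a\, \partial_a w(r;a) - r w'(r;a),
\]
so that the first assertion is equivalent to $\Phi>0$ on $(0,r_0)$. Two facts follow at once from Lemma~\ref{lemma.monot2} and the linearity of $L_a$ in its argument: first, $L_a(\Phi) = \mu a\, L_a(\partial_a w) - L_a(rw') = -L_a(rw') > 0$ on $(0,R(a))$; second, the expansion given there for $l=\mu a$ shows $\Phi>0$ on some right-neighborhood $(0,s_{\mu a}(a))$ of the origin. Moreover, comparing the expansions in Lemma~\ref{lemma.expansion} one checks that $rw'(r;a)\sim \mu a\, r^{\mu}$ and $\Phi(r)\sim c\, r^{\mu + (p+q)/(p-1)}$ with $c>0$ as $r\to 0^+$; in particular the ratio $\Phi/(rw')$ tends to $0$ at the origin.

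I would then argue by contradiction. Assume $\Phi$ has a zero in $(0,r_0)$ and set $r^* := \inf\{r\in(0,r_0) : \Phi(r)=0\}$; the positivity of $\Phi$ near $0$ gives $r^*>0$, and by construction $\Phi>0$ on $(0,r^*)$ with $\Phi(r^*)=0$. Since $w'(\cdot;a)>0$ on $(0,r_0)$, the function
\[
v(r) := \frac{\Phi(r)}{r w'(r;a)}, \qquad r\in (0,r^*],
\]
is of class $C^2$, strictly positive on $(0,r^*)$, and vanishes at both endpoints in the sense $v(r^*)=0$ and $v(r)\to 0$ as $r\to 0^+$. Hence $v$ attains a strictly positive maximum at some interior point $r_m\in(0,r^*)$, where $v'(r_m)=0$ and $v''(r_m)\le 0$.

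To reach a contradiction, I would use that, as is visible from \eqref{linearized}, $L_a$ is a linear second-order differential operator in its argument, of the form $L_a h = (p-1)r^2 h'' + A(r) h' + B(r) h$ whose coefficients depend on $r$, $w$, and $W:=rw'-\mu w$ alone. With $\varphi := rw'(\cdot;a)$, a direct expansion of $L_a(v\varphi)$ gives the Leibniz-type identity
\begin{equation*}
L_a(v\varphi) = \varphi (p-1) r^2 v'' + \bigl[2(p-1) r^2 \varphi' + A\varphi\bigr]\, v' + v\, L_a(\varphi).
\end{equation*}
Evaluated at $r_m$ with $v\varphi=\Phi$, and using $v'(r_m)=0$ together with $L_a(rw')(r_m)<0$ from Lemma~\ref{lemma.monot2}, this reduces to
\begin{equation*}
L_a(\Phi)(r_m) = \varphi(r_m)(p-1)r_m^2\, v''(r_m) + v(r_m)\, L_a(rw')(r_m) < 0,
\end{equation*}
the first term being non-positive and the second strictly negative. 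This contradicts $L_a(\Phi)>0$ and establishes $\Phi>0$ on $(0,r_0)$. The positivity of $\partial_a w(\cdot;a)$ then follows from $\mu a\, \partial_a w(r;a)>rw'(r;a)>0$ on $(0,r_0)$, with the endpoint behavior coming from continuity and from $\partial_a f(0;a)=1$. The main technical obstacle is the singularity of $L_a$ at $r=0$, which prevents any direct use of Lemma~\ref{lemma.comparison}: the device that removes it is precisely the passage to $v=\Phi/(rw')$, whose vanishing at $r=0^+$ comes from the higher-order cancellation in $\Phi$ encoded by $l=\mu a$ being the critical threshold in Lemma~\ref{lemma.monot2}, and which allows the standard interior-maximum argument to replace the comparison principle.
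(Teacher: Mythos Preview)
Your argument for the inequality $\mu a\,\partial_a w>r w'$ on $(0,r_0)$ is correct and rather elegant: it is an inline version of the Berestycki--Nirenberg--Varadhan proof of the maximum principle (the result behind Lemma~\ref{lemma.comparison}), and the crucial input making it work is that the quotient $v=\Phi/(rw')$ tends to $0$ at $r=0^+$, which you correctly trace back to the higher-order cancellation at the threshold $l=\mu a$ in Lemma~\ref{lemma.monot2}. By contrast, the paper proves this first claim by perturbing to $l<\mu a$ (for which $l w_a-rw'<0$ near the origin, producing a genuine left zero $r_1>0$) and then invoking the comparison principle on $[r_1,r_3]$; your approach avoids both the auxiliary Lemma~\ref{lemma.comparison} and the detour through subcritical $l$.

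There is, however, a genuine gap at the right endpoint. Continuity only yields $\partial_a w(r_0;a)\ge 0$, and the strict inequality $\partial_a w(r_0;a)>0$ is exactly what is needed later: in Proposition~\ref{prop.A} the lemma is applied with $r_0=R_1(a)$, where $w'(r_0;a)=0$, and one must conclude $m'(a)=\partial_a w(R_1(a);a)>0$. In that situation $\varphi=rw'$ vanishes at $r_0$, so you cannot extend your quotient argument across $r_0$, and if one assumes $w_a(r_0)=0$ the limit $v(r_0^-)$ is an indeterminate form with no reason to be zero. The paper devotes the entire second half of its proof to this point: it fixes $s_0\in(0,r_0)$, sets $l_0=s_0w'(s_0)/w_a(s_0)\in(0,\mu a)$ and $k_0=(l_0w_a-rw')'(s_0)$, shows $k_0>0$, introduces an auxiliary solution $\psi$ of $L_a(\psi)=0$ with $\psi(s_0)=0$, $\psi'(s_0)=1$, uses the Sturm oscillation theorem (with $w_a$ as the nonvanishing solution) to get $\psi>0$ on $(s_0,r_0]$, and finally compares $l_0w_a-k_0\psi$ with $rw'$ to conclude $l_0 w_a(r_0)>k_0\psi(r_0)>0$. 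This step is not a technicality and cannot be replaced by a continuity remark.
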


\begin{proof}
We adapt the proof of \cite[Lemma 2.2]{CQW03}. To simplify notations, we again set $w_a:= \partial_a w(.;a)$ and $f_a:= \partial_a f(.;a)$ and first show that $\mu a w_a(r)>rw'(r)$ for $r\in (0,r_0)$. Assume for contradiction that there exists some point $r_2\in (0,r_0)$ such that $\mu a w_a(r)-rw'(r)>0$ for $r\in (0,r_2)$ and $\mu a w_a(r_2)-r_2 w'(r_2)=0$ at $r=r_2$. Then, given $l\in (0,\mu a)$, we have $l w_a(r_2)-r_2 w'(r_2)<0$ and, since $l w_a(r) - r w'(r)<0$ for $r\in (0,s_l(a))$ by Lemma~\ref{lemma.monot2}, there is $l\in (0,\mu a)$ sufficiently close to $\mu a$ such that we have $l w_a(r_1)-r_1 w'(r_1) = l w_a(r_3) - r_3 w'(r_3)=0$ and  $l w_a(r) -r w'(r)>0$ for $r\in (r_1,r_3)$ for some $0<r_1<r_3<r_2$. We are then in a position to apply Lemma~\ref{lemma.comparison} to the function $h(r):=lw_a(r)-rw'(r)$ in the interval $[r_1,r_3]\subset(0,r_2)$, recalling that $L_a(h)=lL_a(w_a)-L_a(rw')=-L_a(rw')>0$ in $(r_1,r_3)$ by Lemma~\ref{lemma.monot2}. This implies that $lw_a(r)-rw'(r)\leq 0$ for $r\in (r_1,r_3)$ and a contradiction. Therefore 
\begin{equation}
\mu a w_a(r)>rw'(r) \;\;\mbox{ for }\;\; r\in (0,r_0),
\label{phl14}
\end{equation} 
and as a direct consequence, $w_a>0$ in $(0,r_0)$.

It remains to check that $w_a(r_0)>0$. Fix $s_0\in (0,r_0)$
and define $l_0:=s_0w'(s_0)/w_a(s_0)$ and $k_0:=(l_0w_a-rw')'(s_0)$. We first observe that, since $s_0\in (0,r_0)$, \eqref{phl14} guarantees that $l_0\in (0,\mu a)$. We next show that $k_0>0$. Indeed, if $k_0<0$, we have $l_0 w_a(r)-rw'(r)> 0$ in a left-neighborhood of $s_0$ as $l_0 w_a(s_0)-s_0w'(s_0)= 0$. Defining
$$
s_1 := \sup{\{ r\in (0,s_0)\ :\ l_0 w_a(r) - r w'(r) = 0 \}}\,,
$$
we have just established that $s_1<s_0$ while Lemma~\ref{lemma.monot2} guarantees that $s_1>0$ as $l_0<\mu a$. Therefore, setting $h(r):=l_0 w_a(r) - r w'(r)$ for $r\in [s_1,s_0]$, we have $h(s_1)=h(s_0)=0$ and $L_a(l_0 w_a - r w')>0$ in $(s_1,s_0)$. This implies that $h\le 0$ in $(s_1,s_0)$ by Lemma~\ref{lemma.comparison} and a contradiction. Next, if $k_0=0$,  it follows from Lemma~\ref{lemma.monot2} that $(p-1) s_0^2 (l_0 w_a - r w')''(s_0) = L_a(l_0 w_a - r w')(s_0)>0$ from which we deduce that $l_0 w_a(r)-rw'(r)> 0$ in a left-neighborhood of $s_0$. Arguing as in the previous case, we are again led to a contradiction and conclude that $k_0>0$.

Consider now the solution $\psi$ of $L_a(\psi)=0$ in $(0,R(a))$ with initial conditions $\psi(s_0)=0$ and $\psi'(s_0)=1$. We claim that $\psi$ cannot vanish in $(s_0,r_0]$. Indeed, since $L_a(w_a)=0$ with $w_a(s_0)>s_0 w'(s_0)/\mu a>0$ by Lemma~\ref{lemma.monot2} and \eqref{phl14}, $w_a$ and $\psi$ are linearly independent solutions to the same second-order ordinary differential equation and the oscillation theorem guarantees that there is at least one zero of $w_a$ between two zeros of $\psi$. Thus, if $\psi(s)=\psi(s_0)=0$ for some $s\in (s_0,r_0]$, the function $w_a$ has to vanish at least once in $(s_0,s)$ which contradicts \eqref{phl14}. Therefore, $\psi(r)>0$ for $r\in (s_0,r_0]$. 

Define now the function $\varphi:=l_0 w_a-k_0\psi$. Then, by construction and Lemma~\ref{lemma.monot2},
\begin{equation*}
L_a(\varphi-rw')>0 \ \hbox{in} \ (0,R(a)), \quad \varphi(s_0)- s_0 w'(s_0)=(\varphi-rw')'(s_0)=0,
\end{equation*}
so that $(p-1) s_0^2 (\varphi-rw')''(s_0) = L_a(\varphi-rw')(s_0)>0$. Consequently, $\varphi(r)-rw'(r)>0$ in a right-neighborhood of $s_0$. Assume for contradiction that there is $s_2\in (s_0,r_0]$ such that $\varphi(r)-rw'(r)>0$ for $r\in (s_0,s_2)$ and $\varphi(s_2)-s_2w'(s_2)=0$. As $L_a(\varphi-rw')>0$ in $(s_0,s_2)$, we deduce from Lemma~\ref{lemma.comparison} that $\varphi(r)-rw'(r)\le 0$ for $r\in (s_0,s_2)$ and a contradiction. Consequently, $\varphi(r)-r w'(r)>0$ for $r\in (s_0,r_0]$. In particular, 
\begin{equation*}
l_0w_a(r_0)=\varphi(r_0)+k_0\psi(r_0)>r_0 w'(r_0) + k_0\psi(r_0)>0,
\end{equation*}
which ends the proof.
\end{proof}

As in \cite{CQW03, Pe042}, we split the range of $a$ in three disjoint sets:
\begin{eqnarray*}
A & := & \left\{a>0\ :\ \mbox{ there exists}\;\; R_1(a)\in(0,R(a)), \ w'(R_1(a);a)=0 \right\}, \\
B & := & \left\{a>0\ :\ w'(.;a)>0 \ \hbox{in} \ (0,\infty), \ \lim\limits_{r\to\infty} w(r;a)<\infty  \right\}, \\
C & := & \left\{a>0\ :\ w'(.;a)>0 \ \hbox{in} \ (0,\infty), \ \lim\limits_{r\to\infty} w(r;a)=\infty  \right\}.
\end{eqnarray*}
Since $w'(r;a)\sim \mu a r^{\mu-1}$ as $r\to 0$, we have $w'(.;a)>0$ in a right-neighborhood of $r=0$ from which we deduce that the three sets are disjoint and $A\cup B\cup C=(0,\infty)$. According to the discussion at the beginning of Section~\ref{sec:drcf} and the definition of $w$, a fast orbit corresponds to $a\in B$ while a slow orbit starts from $a\in C$. Thus, the first step is to show that $B$ is non-empty which will follow from the fact that $A$ and $C$ are both open intervals. In a second step, we shall prove that $B$ reduces to a single point, thereby showing the existence and uniqueness of a radially symmetric self-similar very singular solution a stated in Theorem~\ref{th:main}. 

\subsection{Characterization of $A$}\label{sec:ca}

We show that, for all $a\in A$, the profile $f(.;a)$ crosses the $r$-axis, that is, $R(a)<\infty$. To this end, we introduce
\begin{equation}
w^*:=\left(\frac{\mu^{p-1}(\mu-N)}{\mu\b-\a}\right)^{1/(2-p)},
\label{spip}
\end{equation}
which is the constant solution of \eqref{ODE2} without the term $r^\eta |rw'(r) - \mu w(r)|^{2+q-p}$ which is expected to be negligible in the limit $r\to\infty$ as $\eta<0$.

\begin{lemma}\label{lemma.caractA1}
Let $a>0$. Then, $a\in A$ if and only if there exists
$R_1(a)\in(0,R(a))$ such that $w'(.;a)>0$ in $(0,R_1(a))$,
$w'(.;a)<0$ in $(R_1(a),R(a))$ and $w''(R_1(a);a)<0$. Moreover, if $a\in
A$, then
\begin{equation}\label{sup.crit}
\sup\limits_{r\in(0,R(a))}w(r;a)<w^*.
\end{equation}
\end{lemma}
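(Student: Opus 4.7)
The ``$\Leftarrow$'' direction is tautological, since the defining property of $A$ is precisely the existence of a zero of $w'(\cdot;a)$ in $(0,R(a))$. For the ``$\Rightarrow$'' direction, given $a\in A$, I would set $R_1(a):=\inf\{r\in(0,R(a)):w'(r;a)=0\}$. From the expansion of Lemma~\ref{lemma.expansion} together with $w=r^\mu f$, we have $w'(r;a)\sim \mu a\,r^{\mu-1}>0$ as $r\to 0^+$, so $R_1(a)>0$, and by continuity $w'(R_1(a);a)=0$ while $w'(\cdot;a)>0$ on $(0,R_1(a))$. Write $R_1=R_1(a)$ for brevity.

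The central step is to evaluate \eqref{ODE2} at $r=R_1$, where $w'(R_1)=0$ forces $rw'-\mu w|_{r=R_1}=-\mu w(R_1)$. Using $\a-\b\mu=-1/(2-p)$ and the identity $\mu(\mu-N)=\mu^{2-p}(w^*)^{2-p}/(2-p)$ (which follows from \eqref{spip} and $\mu\b-\a=1/(2-p)$), the ODE collapses to
\begin{equation}\label{keystar}
(p-1)R_1^2\,w''(R_1) \;=\; \Phi(w(R_1),R_1), \qquad \Phi(w,r):=\frac{\mu^{2-p}w}{2-p}\bigl[w^{2-p}-(w^*)^{2-p}\bigr]+r^{\eta}\mu^{2+q-p}w^{2+q-p}.
\end{equation}
Since $w'>0$ on $(0,R_1)$ and $w'(R_1)=0$, we have $w''(R_1)\le 0$, so $\Phi(w(R_1),R_1)\le 0$. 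If $w(R_1)\ge w^*$, both summands of $\Phi$ would be nonnegative and the second strictly positive (as $R_1>0$ and $w(R_1)>0$), a contradiction; hence $w(R_1)<w^*$. To upgrade $w''(R_1)\le 0$ to a strict inequality, I argue by contradiction: if $w''(R_1)=0$, then $V:=rw'-\mu w$ and $V'=rw''+(1-\mu)w'$ both vanish at $R_1$; differentiating \eqref{ODE2} once in $r$ and evaluating at $R_1$ produces a massive cancellation, leaving
\[
(p-1)R_1^2\,w'''(R_1)\;=\;\eta\, R_1^{\eta-1}\,\mu^{2+q-p}\,w(R_1)^{2+q-p}\;<\;0
\]
because $\eta<0$. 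But then $w'(r)\approx w'''(R_1)(r-R_1)^2/2<0$ near $R_1$, contradicting $w'>0$ on $(0,R_1)$. Hence $w''(R_1)<0$.

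To show $w'<0$ throughout $(R_1,R(a))$, suppose for contradiction that $R_2\in(R_1,R(a))$ is the first subsequent zero of $w'$; then $w'<0$ on $(R_1,R_2)$ and $R_2$ is a local minimum with $w''(R_2)\ge 0$, so \eqref{keystar} at $R_2$ yields $\Phi(w(R_2),R_2)\ge 0$. A direct analysis of $\Phi$ shows that for each $r>0$, the equation $\Phi(\cdot,r)=0$ has a unique root $w_*(r)\in(0,w^*)$, with $\Phi(w,r)>0$ iff $w>w_*(r)$ (monotonicity in $w$ uses $1+q-p>0$, which follows from $q>p/2>p-1$). Moreover, rewriting the zero curve as $r^{\eta}= c_{p,q,N}\,\bigl(1-(w_*/w^*)^{2-p}\bigr)(w^*/w_*)^{1+q-p}$ exhibits a strictly decreasing right-hand side in $w_*/w^*\in(0,1)$; combined with $\eta<0$, this shows that $r\mapsto w_*(r)$ is strictly increasing. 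Therefore
\[
w(R_1)\;\le\;w_*(R_1)\;<\;w_*(R_2)\;\le\;w(R_2),
\]
contradicting $w(R_2)<w(R_1)$ (forced by $w'<0$ on $(R_1,R_2)$). Hence no such $R_2$ exists, $w'<0$ on $(R_1,R(a))$, and the moreover is immediate: $w$ is strictly increasing on $[0,R_1]$ and strictly decreasing on $[R_1,R(a))$, so $\sup_{r\in(0,R(a))}w(r;a)=w(R_1)<w^*$.

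The most delicate point I expect is the upgrade from $w''(R_1)\le 0$ to $w''(R_1)<0$, which requires the third-derivative Taylor argument above and genuinely relies on $\eta<0$ (equivalently $q>p/2$). The phase-plane step ruling out a subsequent critical point is conceptually clean but rests on the monotonicity of $w_*(r)$, which in turn uses $q>p-1$; both inequalities on $q$ are supplied by the standing assumption \eqref{exp}.
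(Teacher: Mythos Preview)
Your proof is correct and follows essentially the same approach as the paper: both evaluate \eqref{ODE2} at critical points of $w$, use the third-derivative argument (exploiting $\eta<0$) to upgrade $w''(R_1)\le 0$ to a strict inequality, and rule out a second critical point by comparing $w(R_1)$ with $w(R_2)$ through the resulting algebraic relation---your zero-curve $w_*(r)$ is just a geometric repackaging of the paper's direct inequality \eqref{ineq.wstar}. One slip to correct: $V=rw'-\mu w$ does \emph{not} vanish at $R_1$ (indeed $V(R_1)=-\mu w(R_1)<0$); only $V'(R_1)=0$, which is all your differentiation actually uses, and your displayed formula $(p-1)R_1^2 w'''(R_1)=\eta R_1^{\eta-1}(\mu w(R_1))^{2+q-p}$ is correct.
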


\begin{proof}
We adapt the technique in \cite{CQW03}. Let $a\in A$ and denote the smallest positive zero of $w'$ in $(0,R(a))$ by $R_1(a)$, its existence being guaranteed as $a\in A$. Then, $w'>0$ in $(0,R_1(a))$ and $w''(R_1(a))\leq 0$. Assume for contradiction that $w''(R_1(a))=0$. Differentiating \eqref{ODE2} with respect to $r$ and taking $r=R_1(a)$ in the resulting identity give
\begin{equation*}
(p-1)R_1(a)^2 w'''(R_1(a)) = \eta R_1(a)^{\eta-1} (\mu w(R_1(a))^{2+q-p}<0.
\end{equation*}
Consequently, there exists some $\delta>0$ such that $w''>0$ in $(R_1(a)-\delta,R_1(a))$, which entails that $w'(r)<w'(R_1(a))=0$ for $r\in (R_1(a)-\delta,R_1(a))$ and a contradiction with the definition of $R_1(a)$. Hence, $w''(R_1(a))<0$.

We show next that $w'<0$ in $(R_1(a),R(a))$. Since $w'(R_1(a))=0$ and $w''(R_1(a))<0$, we clearly have $w'<0$ in a right-neighborhood of $R_1(a)$. Assume for contradiction that there is $R_2(a)\in(R_1(a),R(a))$ such that $w'<0$ in $(R_1(a),R_2(a))$ and $w'(R_2(a))=0$. Then $w''(R_2(a))\geq0$. We evaluate \eqref{ODE2} at $R_i(a)$, $i=1,2$, and find
\begin{equation*}
\begin{split} \mu(\mu-N)w(R_i(a))&+(\a-\b\mu)(\mu
w(R_i(a)))^{2-p}w(R_i(a))-R_i(a)^{\eta}(\mu
w(R_i(a)))^{q-p+2}\\&=-(p-1)R_i(a)^2w''(R_i(a)),
\end{split}
\end{equation*}
for $i=1,2$. Since $w''(R_1(a))<0\leq w''(R_2(a))$ we deduce
\begin{equation}\label{ineq.wstar}
\begin{split}
(\mu w(R_1(a)))^{2-p}&\left[\b\mu-\a+\mu R_1(a)^{\eta}(\mu
w(R_1(a)))^{q-1}\right]<\mu(\mu-N)\\&\leq(\mu
w(R_2(a)))^{2-p}\left[\b\mu-\a+\mu R_2(a)^{\eta}(\mu
w(R_2(a)))^{q-1}\right].
\end{split}
\end{equation}
But this is a contradiction, since $w(R_1(a))>w(R_2(a))$, $2-p>0$,
$2-p+q-1=q-p+1>0$, $\b\mu-\a>0$ and $R_1(a)^{\eta}>R_2(a)^{\eta}$.
Thus, $w'<0$ in $(R_1(a),R(a))$.

The converse statement being obviously true, it remains to show \eqref{sup.crit}. If $a\in A$, $w(.)$ has a single critical point $R_1(a)$ in $(0,R(a))$ and $w$ attains its maximum at this point. The inequality \eqref{sup.crit} then readily follows from the first inequality in \eqref{ineq.wstar}.
\end{proof}

The next step is to show that any profile $f(.;a)$ starting from $a\in A$ crosses the $r$-axis. The following lemma which is implicitly contained in \cite[Lemma~3.1]{CQW03} will be useful.

\begin{lemma}\label{lemma.sequence}
Let $h$ be a nonnegative function in $C^1([0,\infty))$ such that there is a sequence $(r_k)_{k\ge 1}$, $r_k\to\infty$ as $k\to\infty$, for which $h(r_k)\longrightarrow 0$ as $k\to\infty$. Then, there is a sequence $(\rho_k)_{k\ge 1}$, $\rho_k\to\infty$ as $k\to\infty$, such that $h(\rho_k)\longrightarrow 0$ and $\rho_k h'(\rho_k)\longrightarrow 0$ as $k\to\infty$.
\end{lemma}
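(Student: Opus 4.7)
The plan is to reduce to a statement about real-valued $C^1$ functions on the line by the exponential change of variable $s = \ln r$. I would set $g(s) := h(e^s)$ for $s \in \real$, so that $g$ is $C^1$, nonnegative, and satisfies $g'(s) = e^s h'(e^s)$, i.e.\ $g'(\ln r) = r h'(r)$. The hypothesis then translates into the existence of $s_k := \ln r_k \to \infty$ with $g(s_k) \to 0$, while the goal reduces to finding $\sigma_n \to \infty$ with $g(\sigma_n) \to 0$ and $g'(\sigma_n) \to 0$, since $\rho_n := e^{\sigma_n}$ will then meet the conclusion.

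I would proceed by contradiction. A diagonal argument---picking $\sigma_n > n$ with $g(\sigma_n) < 1/n$ and $|g'(\sigma_n)| < 1/n$---shows that, in the absence of such a sequence, there must exist $\e > 0$ and $T > 0$ such that, for every $s > T$, one has $g(s) \ge \e$ or $|g'(s)| \ge \e$. For every sufficiently large $k$, $s_k > T$ and $g(s_k) < \e$, so I would consider the connected component $V_k = (\alpha_k, \beta_k)$ of the open set $\{s > T : g(s) < \e\}$ containing $s_k$. On $V_k$ the alternative forces $|g'| \ge \e$; since $g'$ is continuous and nowhere zero on $V_k$, it keeps a constant sign, so $g$ is strictly monotone on $V_k$ with $|g'| \ge \e$.

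The main step is then to exhaust the possible forms of $V_k$. If $\beta_k = \infty$, the uniform growth or decay $|g'| \ge \e$ on an unbounded interval drives $g$ to $\pm \infty$, contradicting $0 \le g < \e$ on $V_k$. If $\alpha_k > T$ and $\beta_k < \infty$, continuity at the boundary of the sublevel set forces $g(\alpha_k) = g(\beta_k) = \e$, which is incompatible with the strict monotonicity of $g$ on $V_k$. The only remaining possibility is $V_k = (T, \beta_k)$ with $\beta_k < \infty$; then $g$ must be increasing (otherwise the boundary value $g(\beta_k) = \e$ could not be the maximum of $g$ on $\overline{V_k}$), and integrating $g' \ge \e$ from $T$ to $\beta_k$ yields $\beta_k \le T + 1 - g(T)/\e \le T + 1$. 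But there is at most one component of $\{s > T : g(s) < \e\}$ touching $T$, so this bounded interval is independent of $k$ and can contain only finitely many of the $s_k$, contradicting $s_k \to \infty$. The main obstacle lies in this last case, where it is crucial to combine the positivity $g \ge 0$, the uniform lower bound on $|g'|$, and the topological uniqueness of the left-most component in order to finally contradict $s_k \to \infty$.
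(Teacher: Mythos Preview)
Your proof is correct, but it proceeds quite differently from the paper's.

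The paper argues directly by a dichotomy on the oscillation of $h$: if $h$ has infinitely many local minima going to infinity, one can extract $(\rho_k)$ from these (where $h'(\rho_k)=0$ automatically, and the values $h(\rho_k)$ can be pushed to zero using the given sequence $(r_k)$); otherwise $h$ is eventually monotone, hence decreasing to $0$ since $h\ge 0$ and $\liminf h=0$, so $h'\in L^1(0,\infty)$ and one simply picks $\rho_k>k$ with $\rho_k|h'(\rho_k)|\le 1/k$.

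Your approach avoids this oscillation dichotomy entirely. The exponential substitution $g(s)=h(e^s)$ is a neat device that converts the weighted derivative $r\,h'(r)$ into the ordinary derivative $g'(s)$, after which you run a clean contradiction argument on the connected components of the sublevel set $\{s>T:\,g(s)<\varepsilon\}$. The trichotomy on the shape of the component $(\alpha_k,\beta_k)$ is exhaustive, and each case is handled correctly: unboundedness to the right is excluded by $0\le g<\varepsilon$ combined with $|g'|\ge\varepsilon$; an interior component is excluded by the equality $g(\alpha_k)=g(\beta_k)=\varepsilon$ versus strict monotonicity; and the single component abutting $T$ is bounded by $T+1$, hence cannot absorb all the $s_k$.

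Both arguments are elementary. The paper's is shorter but leaves the selection of local minima with $h(\rho_k)\to 0$ slightly implicit; yours is longer but fully self-contained and sidesteps any discussion of how $h$ oscillates.
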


\begin{proof}
If $h$ oscillates infinitely many times, then one can select $(\rho_k)_{k\ge 1}$ to be a sequence of local minimum points of $h$. Otherwise, if $h$ does not oscillate infinitely many times, it eventually decreases monotonically to zero and $h'$ belongs to $L^1(0,\infty)$. Then, one can select $\rho_k>k$ such that $\rho_k\ |h'(\rho_k)|\le 1/k$.  
\end{proof}

The next result provides a complete description of $A$ in terms of $R(a)$.

\begin{lemma}\label{lemma.caractA2}
Let $a>0$. Then $a\in A$ if and only if $R(a)<\infty$.
\end{lemma}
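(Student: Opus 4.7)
\emph{Direction $R(a) < \infty \Rightarrow a \in A$.} This is immediate: if $a\in B\cup C$, then $w(\cdot;a)$ is defined and positive on all of $(0,\infty)$, which forces $R(a)=\infty$. Since $A$, $B$, $C$ partition $(0,\infty)$, any $a$ with $R(a)<\infty$ must lie in $A$.

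\emph{Direction $a \in A \Rightarrow R(a) < \infty$.} I would argue by contradiction, assuming $R(a)=\infty$. By Lemma~\ref{lemma.caractA1}, $w(\cdot;a)$ has a unique critical point $R_1(a)\in(0,\infty)$ with $\sup w<w^*$, and $w$ strictly decreases on $(R_1(a),\infty)$; thus $w(r)\to \ell$ as $r\to\infty$ for some $\ell\in[0,w^*)$. The first step is to prove $\ell>0$ via the gradient estimate. Writing $f=r^{-\mu}w$ and $W:=rw'-\mu w=r^{\mu+1}f'$, and using $2\mu/p=\mu+1$, Proposition~\ref{prop.gradient} yields $|W|\leq C_4 w^{2/p}$. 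Since $rw'<0$ on $(R_1(a),\infty)$, we have $|W|=\mu w+|rw'|\geq \mu w$, hence $\mu w\leq C_4 w^{2/p}$, which gives $w(r)\geq (\mu/C_4)^{\mu}>0$ for $r>R_1(a)$; in particular $\ell\geq (\mu/C_4)^{\mu}>0$.

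It remains to contradict $\ell\in(0,w^*)$. Applying Lemma~\ref{lemma.sequence} to $h:=w-\ell\geq 0$, there is $\rho_k\to\infty$ with $w(\rho_k)\to\ell$ and $\rho_k w'(\rho_k)\to 0$. Evaluating \eqref{ODE2} at $r=\rho_k$ and letting $k\to\infty$, using $W(\rho_k)\to-\mu\ell$ and $\rho_k^{\eta}\to 0$, every term except the $w''$ term has a limit and we conclude
\begin{equation*}
(p-1)\rho_k^2 w''(\rho_k) \longrightarrow -\Gamma(\ell), \qquad \Gamma(\ell):=\mu(\mu-N)\ell+(\mu\ell)^{2-p}(\alpha-\beta\mu)\ell,
\end{equation*}
and a direct computation using the definition of $w^*$ shows $\Gamma>0$ on $(0,w^*)$. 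The main obstacle is turning this local asymptotic information into a genuine contradiction. For any fixed small $h\in(0,1/2)$, two applications of the fundamental theorem of calculus yield the exact identity
\begin{equation*}
w(\rho_k(1-h))-w(\rho_k) = -h\rho_k w'(\rho_k) + \int_{\rho_k(1-h)}^{\rho_k}(t-\rho_k(1-h))\,w''(t)\,dt.
\end{equation*}
The left-hand side tends to $0$ (as both $w(\rho_k(1-h))$ and $w(\rho_k)$ tend to $\ell$) and the first right-hand term tends to $0$ by the choice of $\rho_k$. Rewriting $t^2 w''(t)$ via \eqref{ODE2} in terms of $\Gamma(w(t))$, $tw'(t)$, and $t^{\eta}|W|^{2+q-p}$, and using (i) $w(t)\to\ell$ uniformly on $[\rho_k(1-h),\rho_k]$, (ii) $t^{\eta}\to 0$, (iii) the uniform bound $|tw'(t)|\leq C_4 (w^*)^{2/p}$ from the gradient estimate, and (iv) the $L^1$-smallness $\int_{\rho_k(1-h)}^{\rho_k}|w'(t)|\,dt = w(\rho_k(1-h))-w(\rho_k)\to 0$, one checks that the integral converges as $k\to\infty$ to $-\Gamma(\ell)(h^2/2+O(h^3))/(p-1)$. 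Since $\Gamma(\ell)>0$, this limit is strictly negative for $h$ small enough, contradicting the fact that the left-hand side tends to $0$. Hence $R(a)<\infty$.
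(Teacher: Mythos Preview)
Your proof is correct, but it reorganizes the two key ingredients compared with the paper and makes the second half harder than necessary.

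\medskip

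\textbf{Comparison.} The paper proceeds as follows: first (Step~1) it shows $\ell\in\{0,w^*\}$, and then (Step~2) it uses the gradient estimate to rule out $\ell=0$. You invert this: first you use the gradient estimate to obtain the lower bound $w\ge (\mu/C_4)^{\mu}$ on $(R_1(a),\infty)$, hence $\ell>0$, and then you rule out $\ell\in(0,w^*)$ by an integral/Taylor argument. Both orderings work. Your use of the gradient estimate (yielding directly $\mu w\le |W|\le C_4 w^{2/p}$, hence a uniform positive lower bound on $w$) is in fact a bit cleaner than the paper's Step~2, which derives $r|f'|/f\to 0$ and then $w'\ge 0$ for large $r$.

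Where the two proofs differ most is in ruling out $\ell\in(0,w^*)$ (your Step~2 vs.\ the paper's Step~1). The paper simply applies Lemma~\ref{lemma.sequence} \emph{a second time}, to the nonnegative function $-rw'$ (which tends to $0$ along the first sequence), producing a sequence $\rho_k$ along which both $\rho_k w'(\rho_k)\to 0$ and $\rho_k^2 w''(\rho_k)\to 0$. Plugging into \eqref{ODE2} at $r=\rho_k$ immediately gives $\Gamma(\ell)=0$, i.e.\ $\ell\in\{0,w^*\}$, contradicting $\ell\in(0,w^*)$. Your Taylor-remainder argument reaches the same contradiction but at greater cost: to make the integral $\int_{\rho_k(1-h)}^{\rho_k}(t-\rho_k(1-h))\,w''(t)\,dt$ converge to the claimed limit you must also control the nonlinear dependence of $|W|^{2-p}$ on $tw'$ (via a Lipschitz bound on $x\mapsto x^{2-p}$ on the compact range of $|W|$, which is legitimate here since $|W|\ge \mu\ell>0$), and then use the $L^1$-smallness of $w'$ to kill all $tw'$-contributions. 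This is fine, but the second application of Lemma~\ref{lemma.sequence} gets you there in one line.

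\medskip

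In short: your argument is sound; replacing your Step~2 with a second invocation of Lemma~\ref{lemma.sequence} would make it as short as the paper's proof while keeping your (slightly slicker) Step~1.
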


\begin{proof}
The proof is divided into two technical steps. Let $a\in A$ and suppose for contradiction that $R(a)=\infty$.

\noindent \textbf{Step 1. Interpretation in terms of decay.} By Lemma~\ref{lemma.caractA1}, we deduce that $w'<0$ in $(R_1(a),\infty)$ and the non-negativity of $w$ guarantees that $w$ has a limit $\ell\ge 0$ as $r\to\infty$. In addition, $w'$ belongs to $L^1(R_1(a),\infty)$ from which we deduce that there exists a sequence $r_k\to\infty$ such that $r_k w'(r_k) \longrightarrow 0$ as $k\to\infty$. Since $w'<0$, it follows from Lemma~\ref{lemma.sequence} that we may also assume that $r_k(r_k w''(r_k) + w'(r_k)) \longrightarrow 0$ as $k\to\infty$, whence $r_k^2 w''(r_k) \longrightarrow 0$ as $k\to\infty$. We evaluate \eqref{ODE2} at $r=r_k$ and pass to the limit as $k\to\infty$ to deduce that $\ell\in \{0,w^*\}$ (recall that $\eta<0$). Owing to \eqref{sup.crit}, the possibility $\ell=w^*$ is excluded and we conclude that 
\begin{equation}
\lim\limits_{r\to\infty} w(r)=0.
\label{phl15}
\end{equation}

\noindent \textbf{Step 2. The contradiction.} Since $p<2$, we infer from \eqref{phl12} and \eqref{phl15} that 
\begin{equation*}
\frac{r|f'(r)|}{f(r)}\leq C\ r f(r)^{(2-p)/p} = C\ w(r)^{(2-p)/p}\longrightarrow 0.
\end{equation*}
Consequently, there exists $r_{*}>R_1(a)$ such that
\begin{equation*}
-\mu\le \frac{rf'(r)}{f(r)} \le 0 \ \hbox{for any} \ r>r_{*},
\end{equation*}
from which we deduce that
$$
w'(r) = r^\mu\ \left( f'(r) + \mu f(r) \right) \ge 0\,, \quad r\in (r_*,\infty)\,,
$$
and a contradiction with \eqref{phl15}. Therefore $R(a)<\infty$.

Conversely, if $R(a)<\infty$, then $w(R(a))=0=w(0)$, which implies that
$w$ has a maximum point in $(0,R(a))$, hence $a\in A$.
\end{proof}

\begin{proposition}\label{prop.A}
The set $A$ is an open interval of the form $(0,a_*)$ for some $a_*>0$.
\end{proposition}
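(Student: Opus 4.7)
The plan is to verify three properties of $A$: openness, the left-interval structure (if $a \in A$ and $0 < b < a$, then $b \in A$), and the existence of finite bounds at both endpoints. Together they yield $A = (0,a_*)$ with $a_* = \sup A \in (0,\infty)$.

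\emph{Openness.} Fix $a_0 \in A$. Lemmas~\ref{lemma.caractA1} and~\ref{lemma.caractA2} provide $R_1(a_0) \in (0, R(a_0))$ with $w'(R_1(a_0); a_0) = 0$, $w''(R_1(a_0); a_0) < 0$, and $R(a_0) < \infty$. Since $(0,0)$ is the trivial equilibrium of~\eqref{phl1} and $f(0;a_0) = a_0 > 0$, uniqueness of solutions to~\eqref{phl1} (from the local Lipschitz right-hand side) forces $f'(R(a_0); a_0) < 0$. Combined with the $C^1$-continuous dependence of $(f(\cdot;b), F(\cdot;b))$ on the initial value $b$, the implicit function theorem applied to the relations $f(R(b);b) = 0$ and $w'(R_1(b);b) = 0$ furnishes continuous maps $b \mapsto R(b)$ and $b \mapsto R_1(b)$ near $a_0$ with $R_1(b) \in (0, R(b))$, so that a whole neighborhood of $a_0$ lies in $A$.

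\emph{Left-interval structure.} I would exploit the monotonicity Lemma~\ref{monotonicity}. Let $a \in A$ and suppose for contradiction that $b \in (0,a)\setminus A$, so that $R(b) = \infty$ and $w'(\cdot;b) > 0$ on $(0,\infty)$. Set $\bar c := \sup\{c \in [b,a] : c \notin A\}$; by closedness of the complement of $A$ in $(0,\infty)$, $\bar c \notin A$, while openness of $A$ forces $\bar c < a$. Pick $c_n \in A$ with $c_n \searrow \bar c$. Using continuous dependence, the nondegeneracy $w''(R_1(c_n); c_n) < 0$, and the monotonicity information from Lemma~\ref{monotonicity}, I would show the critical points $R_1(c_n)$ cannot accumulate either at a finite value $\bar R_1$ (which would give $w'(\bar R_1;\bar c) = 0$, contradicting $w'(\cdot;\bar c) > 0$ on $(0,\infty)$) or at infinity (ruled out by combining the sharp gradient estimate of Proposition~\ref{prop.gradient} with the upper bound $w(R_1(c_n); c_n) < w^*$ from Lemma~\ref{lemma.caractA1} and the asymptotic alternative $\bar c \in B$ versus $\bar c \in C$), reaching the desired contradiction.

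\emph{Non-emptiness, upper bound, and main obstacle.} Exhibiting $a \in A$ and bounding $\sup A < \infty$ require asymptotic analyses in the two extreme regimes $a \to 0^+$ and $a \to \infty$, carried out via rescaling arguments together with comparisons to sub- or super-solutions of~\eqref{ODE2} in the rescaled variable. I anticipate the main technical obstacle to be ruling out the scenario $R_1(c_n) \to \infty$ in the left-interval step: it demands precise asymptotic control of $w(\cdot;\bar c)$ at infinity, in particular excluding the possibility that $w(\cdot;\bar c)$ approaches $w^*$ from below in a way compatible with the existence of a sequence of nearby profiles $w(\cdot;c_n)$ whose maxima recede to infinity.
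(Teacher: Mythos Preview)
Your outline matches the paper's architecture (openness via the implicit function theorem at $R_1(a)$, then an interval argument by taking the supremum of non-$A$ points below a given $a\in A$), but there is a genuine gap precisely where you flag the ``main technical obstacle'': you do not supply the mechanism that rules out $R_1(c_n)\to\infty$ when $\bar c>0$. Neither Proposition~\ref{prop.gradient} nor the bare inequality $w(R_1(c_n);c_n)<w^*$ suffices. From these you only obtain $w(r;\bar c)\le w^*$ for all $r$, so $\bar c\in B$; but that is no contradiction --- indeed elements of $B$ do satisfy $w(r;\cdot)\to w^*$. What is missing is a \emph{strict, $n$-independent} bound below $w^*$. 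The paper gets it from a short but decisive consequence of Lemma~\ref{monotonicity}: the function $m(a):=w(R_1(a);a)$ is $C^1$ on $A$ with $m'(a)=\partial_a w(R_1(a);a)>0$. Hence for any fixed $c_*\in(\bar c,a)$ and all $c_n\le c_*$ one has $w(r;c_n)\le m(c_n)\le m(c_*)<w^*$, and in the limit $w(r;\bar c)\le m(c_*)<w^*$. The usual $\{0,w^*\}$-alternative then forces $w(\cdot;\bar c)\equiv 0$, i.e.\ $\bar c=0$, contradicting $\bar c\ge b>0$. This monotonicity of $m$ is the missing key idea; Proposition~\ref{prop.gradient} plays no role in this proof (it was already consumed in Lemma~\ref{lemma.caractA2}).

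Two smaller points. First, you omit the possibility $R_1(c_n)\to 0$; the paper handles it separately by noting that $w'(\cdot;\bar c)>0$ near $0$ (Lemma~\ref{lemma.expansion}) while continuous dependence would force $w'(r;\bar c)\le 0$ for small $r$. Second, establishing $\sup A<\infty$ is not part of Proposition~\ref{prop.A}; the statement only asserts $A=(0,a_*)$ with $a_*>0$, and finiteness of $a_*$ is obtained later from Proposition~\ref{prop.C}. So you need the rescaling for small $a$ (non-emptiness), but no large-$a$ analysis here.
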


\begin{proof}
We first show that $A$ is non-empty and that $(0,a)\subset A$ for $a$ sufficiently
small. The proof is the same as that of \cite[Theorem~2]{Pe042} and we only sketch it here for
the sake of completeness. We perform a rescaling in the $r$-variable and, for $a>0$, we define $g(.;a)$ by 
$$
f(r;a) = a g\left( r a^{(2-p)/p} ; a \right)\,, \quad r\in [0,R(a))\,.
$$
Then $g=g(.;a)$ solves
\begin{equation}\label{eqresc.1}
\left\{\begin{array}{ll}(|g'|^{p-2}g')'(s)+\displaystyle{\frac{N-1}{s}} (|g'|^{p-2}g')(s) +\b
s g'(s)+\a g(s)-a^{(2q-p)/p} |g'(s)|^q=0, \\ 
 \\
g(0)=1, \ g'(0)=0.\end{array}\right.
\end{equation}
Owing to Lemma~\ref{lemma.basic}, we have 
\begin{equation}
g(s)>0 \;\;\mbox{ and }\;\; - \a^{1/q}\ a^{-(2q-p)/pq} \le g'(s) < 0 \;\;\mbox{ for }\;\; s\in \left( 0 , R(a) a^{(2-p)/p} \right)\,.
\label{phl16}
\end{equation}

We next study the limit of \eqref{eqresc.1} as $a\to 0$ which reads, since $q>p/2$, 
\begin{equation}
\left\{\begin{array}{ll}(|h'|^{p-2}h')'(s)+\displaystyle{\frac{N-1}{s}} (|h'|^{p-2}h')(s) +\b
sh'(s)+\a h(s)=0, \\ 
 \\
h(0)=1, \ h'(0)=0.\end{array}\right.
\label{phl17}
\end{equation}
Arguing as in Lemma~\ref{lemma.basic}, there is $S_0>0$ such that the solution $h$ to \eqref{phl17} satisfies $h(S_0)=0$, $h(s)>0$ and $h'(s)<0$ for $s\in (0,S_0)$. Moreover, proceeding as in the proof of \cite[Theorem~2]{Pe042}, we show that $S_0<\infty$ with $h'(S_0)<0$. It is then easy to show, using the continuous dependence of solutions for \eqref{eqresc.1}, that for $a>0$
sufficiently small, $g(.;a)$ also vanishes and thus $R(a)<\infty$. Consequently, thanks to Lemma~\ref{lemma.caractA2}, $(0,a)\subset A$ for $a$ small enough.

It remains to show that $A$ is an open interval. By Lemma~\ref{lemma.caractA1}, if $a\in
A$, we have $w'(R_1(a);a)=0$ with $w''(R_1(a);a)<0$. By continuous dependence, this property readily implies that $A$ is open. In addition, we can apply the implicit function theorem and conclude that $a\mapsto R_1(a)$ belongs to $C^1(A)$. Define 
$m(a):=w(R_1(a);a)$ for $a\in A$. Then $m\in C^1(A)$ and we infer from Lemma~\ref{monotonicity} (with $r_0=R_1(a)$) and Lemma~\ref{lemma.caractA1} that 
\begin{equation}
\frac{dm}{da}(a)=w'(R_1(a);a) \frac{dR_1}{da}(a)+ \partial_a w(R_1(a);a) = \partial_a w(R_1(a);a) >0\,, \quad a\in A \,.
\label{phl18}
\end{equation} 
Consider now $a_2\in A$ and define $a_1\ge 0$ by 
$$
a_1 := \sup{\{ a \in (0,a_2)\ :\ a\not\in A \}}\,.
$$
Since $A$ is open, we have $a_1<a_2$, $a_1\not\in A$,  and $(a_1,a_2)\subset A$. Setting
$$
\rho := \liminf_{a\searrow a_1} R_1(a) \in [0,\infty]\,,
$$
there are three possibilities:

If $\rho=\infty$, we actually have $R_1(a)\longrightarrow \infty$ as $a\searrow a_1$ and it follows by continuous dependence that, for any $r>0$, we have $r<R_1(a)$ for $a>a_1$ close enough to $a_1$ and thus
$$
w'(r;a_1) = \lim_{a\searrow a_1} w'(r;a) \ge 0\,.
$$
In addition, by Lemma~\ref{lemma.caractA2} and \eqref{phl18},
\begin{equation}\label{crit.sup2}
w(r;a_1) =\lim_{a\searrow a_1} w(r;a) \le m((a_1+a_2)/2)<w^*,
\end{equation}
so that $w(.;a_1)$ is a non-decreasing and bounded function. It thus has a limit $\ell\in [0, m((a_1+a_2)/2)]$ as $r\to\infty$. If $\ell>0$, we argue as in Step~1 of the proof of Lemma~\ref{lemma.caractA2} to obtain that $\ell=w^*$ and a contradiction. Consequently, $\ell=0$ so that $w(.;a_1)\equiv 0$ and thus $a_1=0$.

If $\rho\in (0,\infty)$, there is a sequence $(a^j)\in (a_1,a_2)$ such that $a^j\longrightarrow a_1$ and $R_1(a^j)\longrightarrow \rho$ as $j\to\infty$. By continuous dependence we have
$$
w'(\rho;a_1) = \lim_{j\to\infty} w'(R_1(a^j);a^j) = 0\,,
$$
whence $a_1\in A$ and a contradiction.

If $\rho=0$, there is a sequence $(a^j)\in (a_1,a_2)$ such that $a^j\longrightarrow a_1$ and $R_1(a^j)\longrightarrow 0$ as $j\to\infty$. Let us assume for contradiction that $a_1>0$. Then there is $r_1>0$ such that $w'(r;a_1)>0$ for $r\in (0,r_1)$ by Lemma~\ref{lemma.expansion}.  Given $r\in (0,r_1)$, we have $R_1(a^j)<r_1$ for $j$ large enough whence 
$$
w'(r;a_1) = \lim_{j\to\infty} w'(r;a^j) \le 0\,,
$$
by continuous dependence, and a contradiction. Consequently, $a_1=0$ in this case as well.

We have thus shown that, given $a_2\in A$, the interval $(0,a_2)$ is included in $A$ from which Proposition~\ref{prop.A} follows.
\end{proof}

\subsection{Characterization of $C$}\label{sec:cc}

We begin with the following useful result.

\begin{lemma}\label{lemma.caractC1}
Let $a>0$. Then $a\in C$ if and only if
\begin{equation}
\sup\limits_{r\in(0,R(a))}w(r;a)>w^*\,.
\label{phl19}
\end{equation}
\end{lemma}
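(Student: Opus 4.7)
The forward implication is immediate: if $a \in C$, then $w(r;a) \to \infty$ as $r\to\infty$ by definition of $C$, so $\sup_{(0,R(a))} w(.;a) = \infty > w^*$.

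For the converse, my plan is to proceed by exclusion. Assume $\sup_{(0,R(a))} w(r;a) > w^*$. By Lemma~\ref{lemma.caractA1}, this rules out $a \in A$. Since $A \cup B \cup C = (0,\infty)$ is a disjoint union, it suffices to show that $a \in B$ leads to a contradiction.

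Suppose for contradiction that $a \in B$, so $R(a) = \infty$, $w'(.;a) > 0$ on $(0,\infty)$, and $\ell := \lim_{r\to\infty} w(r;a)$ exists and is finite. Since $w(r_0;a) = r_0^{\mu} f(r_0;a) > 0$ for any $r_0 > 0$ and $w(.;a)$ is increasing, we have $\ell > 0$ and $\sup w(.;a) = \ell$. My goal is to show $\ell = w^*$, which will contradict the standing assumption $\sup w(.;a) > w^*$. The idea, mimicking Step~1 of the proof of Lemma~\ref{lemma.caractA2}, is to extract a sequence $\rho_k \to \infty$ along which $\rho_k w'(\rho_k) \to 0$ and $\rho_k^2 w''(\rho_k) \to 0$, and then pass to the limit in \eqref{ODE2}. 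Since $w'(.;a) > 0$ and $\int_0^\infty w'(r;a)\,dr = \ell < \infty$, a dyadic mean-value argument produces a sequence $r_k \to \infty$ with $r_k w'(r_k;a) \to 0$. Applying Lemma~\ref{lemma.sequence} to the nonnegative $C^1$ function $h(r) := r w'(r;a)$ yields $\rho_k \to \infty$ such that $h(\rho_k) \to 0$ and $\rho_k h'(\rho_k) = \rho_k w'(\rho_k;a) + \rho_k^2 w''(\rho_k;a) \to 0$, whence indeed $\rho_k^2 w''(\rho_k;a) \to 0$.

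Evaluating \eqref{ODE2} at $r = \rho_k$ and letting $k \to \infty$, using that $\rho_k^\eta \to 0$ because $\eta < 0$, the middle two linear terms and the nonlinear bracket combine to give
$$
\mu(\mu-N)\,\ell + (\mu\ell)^{2-p}(\alpha - \beta\mu)\,\ell = 0.
$$
Dividing by $\ell > 0$ and using $\beta\mu - \alpha > 0$, this forces $\ell^{2-p} = \mu^{p-1}(\mu - N)/(\beta\mu - \alpha) = (w^*)^{2-p}$, i.e.\ $\ell = w^*$, the desired contradiction with $\sup w(.;a) = \ell > w^*$. Hence $a \notin B$, and therefore $a \in C$. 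The main technical point is the double extraction via Lemma~\ref{lemma.sequence}, which is needed because the ODE \eqref{ODE2} involves both $rw'$ and $r^2 w''$; once that is in place, the control $\rho_k^\eta \to 0$ cleanly kills the lower-order absorption contribution and one recovers, in the limit, precisely the algebraic equation whose nonzero root defines $w^*$.
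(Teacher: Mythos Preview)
Your proof is correct and follows essentially the same approach as the paper's: rule out $a\in A$ via Lemma~\ref{lemma.caractA1}, then rule out $a\in B$ by showing that the finite limit $\ell$ of the increasing function $w(.;a)$ would have to equal $w^*$, using the sequence-extraction argument from Step~1 of the proof of Lemma~\ref{lemma.caractA2}. The only cosmetic difference is that the paper simply refers back to that Step~1 (obtaining $\ell\in\{0,w^*\}$), whereas you spell out the adaptation explicitly (applying Lemma~\ref{lemma.sequence} to $h=rw'$, which is nonnegative here since $w'>0$, rather than to $-rw'$ as in the $a\in A$ case) and observe directly that $\ell>0$.
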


\begin{proof}
The direct implication is obvious. Conversely, if $a>0$ is such that \eqref{phl19} is satisfied, it follows from Lemma~\ref{lemma.caractA1} that $a\not\in A$. Then $a\in B\cup C$ and $w(.;a)$ is an increasing function in $(0,\infty)$. If it is bounded, it has a finite limit $\ell$ as $r\to\infty$ and we argue as in the first step of the proof of Lemma~\ref{lemma.caractA2} to deduce that $\ell\in \{0, w^*\}$, clearly contradicting \eqref{phl19}. Consequently, $w(.;a)$ is unbounded and $a\in C$.
\end{proof}

\begin{proposition}\label{prop.C}
The set $C$ is an open interval of the form $(a^*,\infty)$.
\end{proposition}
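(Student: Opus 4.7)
The plan is to prove Proposition~\ref{prop.C} in three stages: openness of $C$, non-emptiness of $C$, and the fact that $C$ is upward closed inside $B\cup C$. Combining these yields $C = (a^*,\infty)$ for $a^* := \inf C$.

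The openness of $C$ is immediate from Lemma~\ref{lemma.caractC1} combined with continuous dependence of $(f(.;a),F(.;a))$ on the parameter $a$ in \eqref{phl1}: given $a_0\in C$, there is $r_0\in(0,\infty)$ with $w(r_0;a_0)>w^*$, and since $a\mapsto w(r_0;a) = r_0^\mu f(r_0;a)$ is continuous at $a_0$, the strict inequality survives on a neighborhood of $a_0$, which therefore lies in $C$ by Lemma~\ref{lemma.caractC1}.

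The non-emptiness step is the real core of the argument, and I propose to extract it directly from the crude bound $|f'(r;a)|\leq (\alpha a)^{1/q}$ of Lemma~\ref{lemma.basic}, without invoking any finer gradient estimate. Integrating from the origin gives $f(r;a)\geq a - r(\alpha a)^{1/q}$ on $[0,R(a))$; evaluating at $r_a := a^{(q-1)/q}/(4\alpha^{1/q})$ yields $f(r_a;a)\geq 3a/4 > 0$, which in particular forces $r_a<R(a)$ for every $a>0$. Consequently,
\begin{equation*}
w(r_a;a) = r_a^{\mu} f(r_a;a) \,\geq\, \frac{3}{4(4\alpha^{1/q})^{\mu}} \, a^{1+\mu(q-1)/q}.
\end{equation*}
The algebraic identity $\mu/(\mu+1) = p/2$ shows that the assumption $q>p/2$ in \eqref{exp} is exactly equivalent to $q>\mu/(\mu+1)$, that is, to $1+\mu(q-1)/q > 0$. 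Hence $w(r_a;a)\to\infty$ as $a\to\infty$, so $\sup_{r>0} w(r;a) > w^*$ for all sufficiently large $a$, and Lemma~\ref{lemma.caractC1} places such $a$ in $C$. Note that $r_a\to 0$ when $q<1$ while $r_a\to\infty$ when $q>1$, so this single estimate handles the whole range $p/2<q<q_*$ uniformly; this is precisely the gain over the $q>1$ restriction in \cite{Pe042}.

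For the upward-closed property, set $a^*:=\inf C$, which is finite by the previous step; since $A\cap C=\emptyset$ and $A=(0,a_*)$ by Proposition~\ref{prop.A}, we have $a^*\geq a_*$ and $B\cup C=[a_*,\infty)$. Given $a_2>a^*$, choose $a_1\in C\cap(a^*,a_2)$ (non-empty by definition of the infimum); then $[a_1,a_2]\subset B\cup C$, so every $a\in[a_1,a_2]$ satisfies $w'(.;a)>0$ on $(0,\infty)$ and $R(a)=\infty$. Lemma~\ref{monotonicity}, applied on intervals $(0,r_0)$ with $r_0$ arbitrarily large, gives $\partial_a w(r;a)>0$ for every $r>0$ and $a\in[a_1,a_2]$; integrating in $a$ yields $w(r;a_2)>w(r;a_1)$ pointwise in $r$, so picking $r_0$ with $w(r_0;a_1)>w^*$ (which exists because $a_1\in C$) forces $w(r_0;a_2)>w^*$ and hence $a_2\in C$ by Lemma~\ref{lemma.caractC1}. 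Combined with openness, this proves $C=(a^*,\infty)$. The main obstacle is the non-emptiness step; the remaining two parts are routine once the monotonicity machinery of Section~\ref{sec:drcf} and the characterization Lemma~\ref{lemma.caractC1} are at hand.
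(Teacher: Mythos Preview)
Your proof is correct and follows essentially the same approach as the paper. The non-emptiness argument is virtually identical (the paper evaluates at $r=\alpha^{-1/q}a^{(q-1)/q}/2$ rather than your $r_a$, obtaining the same exponent $(2q-p)/(q(2-p))=1+\mu(q-1)/q$), and the openness via Lemma~\ref{lemma.caractC1} is the same. For the interval property the paper is much terser, simply noting that $\partial_a w(.;a_0)>0$ for $a_0\in C$ by Lemma~\ref{monotonicity} and concluding; your version, which invokes Proposition~\ref{prop.A} to ensure $[a_1,a_2]\subset B\cup C$ and then integrates $\partial_a w$ over $[a_1,a_2]$, makes explicit the step the paper leaves to the reader.
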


\begin{proof}
Let us first show that $C$ is non-empty. Given $a>0$, by Lemma~\ref{lemma.basic} we
have $f'(r)\geq-(\a a)^{1/q}$ for $r\in (0,R(a))$, whence
\begin{equation}
f(r) \ge a - (\alpha a)^{1/q} r \quad \hbox{for} \ r\in (0,R(a)).
\label{phl20}
\end{equation}
In particular, $R(a)\geq a(\a a)^{-1/q}=\a^{-1/q}a^{(q-1)/q}$. Consequently, $\alpha^{-1/q} a^{(q-1)/q}/2
\in (0,R(a))$ and it follows from \eqref{phl20} that 
$$
w\left( \alpha^{-1/q} a^{(q-1)/q}/2 ; a \right) \geq \frac{a}{2}(\alpha^{-1/q}
a^{(q-1)/q}/2)^{\mu} = \a^{-p/q(2-p)}\ 2^{-2/(2-p)}\ a^{(2q-p)/q(2-p)},
$$
which can be chosen greater that $2w^*$ for $a$ large enough since
$q>p/2$, and guarantees that $a\in C$ for $a$ large enough,
according to Lemma~\ref{lemma.caractC1}. This shows that $C$ is
non-empty and that $a\in C$ for $a$ large enough. In addition, $C$ is clearly an open set thanks to Lemma~\ref{lemma.caractC1}.

Now, let $a_0\in C$. Then $w'(.;a_0)>0$ in $(0,\infty)$ and also $\partial_a w(.;a_0)>0$ in $(0,\infty)$ by Lemma~\ref{monotonicity}. Therefore, $C$ is an open interval.
\end{proof}

We now show that the set $C$ is composed in fact of the \emph{slow orbits}.

\begin{proposition}\label{prop.caractC}
For every $a\in C$, there exists $k(a)\in(0,\infty)$ such that
\begin{equation*}
\lim\limits_{r\to\infty}r^{(p-q)/(q-p+1)}f(r;a)=k(a).
\end{equation*}
\end{proposition}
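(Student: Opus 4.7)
Plan. Fix $a\in C$: then $f=f(\cdot;a)$ is defined on $[0,\infty)$, positive and strictly decreasing to zero, while $w(r;a)=r^\mu f(r;a)$ is strictly increasing with $w(r;a)\to\infty$ ($\mu=p/(2-p)$). Set $\nu:=(p-q)/(q-p+1)\in(0,\mu)$, the slow-orbit exponent identified at the start of Section~\ref{sec:drcf}, and $\phi(r):=r^\nu f(r;a)$; the goal is $\phi(r)\to k(a)\in(0,\infty)$. The key algebraic identity $\alpha=\nu\beta$ lets me rewrite \eqref{IVPa} in the form
$$
\beta r^{1-\nu}\phi'(r)=|f'(r)|^q-r^{1-N}\bigl(r^{N-1}|f'|^{p-2}f'\bigr)'(r), \quad (\star)
$$
which reflects the heuristic $\alpha f+\beta rf'\approx 0$ at leading order on slow orbits, with diffusion and absorption both sub-dominant.

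\textbf{Step 1 (uniform upper bound).} At any local maximum $r_0$ of $\phi$, $\phi'(r_0)=0$ reads $r_0f'(r_0)=-\nu f(r_0)$; substituted into \eqref{IVPa}, the first-order terms $\alpha f+\beta rf'$ cancel thanks to $\alpha=\nu\beta$, leaving an explicit expression for $(p-1)|f'(r_0)|^{p-2}f''(r_0)$ in terms of $(f(r_0),r_0)$ only. Combined with $\phi''(r_0)\le 0$ (equivalent to $f''(r_0)\le\nu(\nu+1)f(r_0)/r_0^2$) and the inequality $(p-1)(\nu+1)>N-1$ (a consequence of $p>p_c$), this yields the polynomial bound $\phi(r_0)^{q-p+1}\le K(N,p,q)$. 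Since $\phi(0)=0$ by Lemma~\ref{lemma.expansion} and $\phi(r)\le ar^\nu$ from $f\le a$, a running-maximum argument produces $\phi\le M$ uniformly on $[0,\infty)$; the residual case where $\phi$ is eventually monotone increasing is excluded by combining $(\star)$ with the bounds $|rf'|\le\nu f$ (forced by $\phi'\ge 0$) and the gradient estimate of Proposition~\ref{prop.gradient}.

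\textbf{Step 2 (existence of the limit).} By Step~1, $f(r)\le Mr^{-\nu}$ for $r\ge 1$, and Proposition~\ref{prop.gradient} then gives $|f'(r)|\le Cf(r)^{2/p}\le Cr^{-2\nu/p}$. Inserted in $(\star)$, both right-hand-side contributions to $\phi'$ are absolutely integrable on $[1,\infty)$: $r^{\nu-1}|f'|^q\lesssim r^{\nu(1-2q/p)-1}$ has exponent $<-1$ thanks to $q>p/2$, and an integration by parts on the diffusion term produces a boundary term $r^{\nu-1}|f'|^{p-1}\lesssim r^{\nu(2-p)/p-1}\to 0$ (since $\nu<\mu$) plus an absolutely integrable remainder. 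Hence $\phi'\in L^1(1,\infty)$ and $\phi(r)\to k(a)\in[0,M]$.

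\textbf{Step 3 (positivity of $k(a)$).} Suppose for contradiction that $k(a)=0$. Since $\phi'\in L^1$, Lemma~\ref{lemma.sequence} provides a sequence $r_j\to\infty$ along which $\phi(r_j)$, $r_j\phi'(r_j)$ and, by an additional application, $r_j^2\phi''(r_j)$ all vanish. Translated via $w=r^{\mu-\nu}\phi$, this gives $w(r_j)/r_j^{\mu-\nu}\to 0$ whereas $w(r_j)\to\infty$ and $r_jw'(r_j)-\mu w(r_j)\sim-\nu w(r_j)$. Evaluating \eqref{ODE2} at $r_j$, dividing by $r_j^{\mu-\nu}$ and passing to the limit, the first-order inner bracket $(\alpha-\beta\mu)w+\beta rw'$ cancels at leading order (because $\alpha-\beta\nu=0$), so the surviving balance between the homogeneous second-order terms and the absorption $|rw'-\mu w|^{2-p}\,r^\eta|rw'-\mu w|^q$ forces $w(r_j)/r_j^{\mu-\nu}$ to converge to a strictly positive constant determined by $(N,p,q)$, contradicting $\phi(r_j)\to 0$. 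Step~3 is the technical crux; ruling out the intermediate regime $w(r)\to\infty$ with $w(r)=o(r^{\mu-\nu})$ rests on the fact that \eqref{ODE2} admits no nontrivial monotone-unbounded power-law solution other than $w\sim cr^{\mu-\nu}$.
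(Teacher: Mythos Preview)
Your strategy is genuinely different from the paper's. The paper does not work with $\phi(r)=r^{\nu}f(r;a)$ directly; instead it introduces the logarithmic derivative
\[
\Lambda(\tau)=-\,\frac{e^{\tau}f'(e^{\tau};a)}{f(e^{\tau};a)}\,,\qquad \tau=\log r\,,
\]
and studies the scalar ODE $(p-1)\Lambda'=F(\tau,\Lambda)$ obtained from \eqref{IVPa}. Because $a\in C$ means $w(e^{\tau};a)\to\infty$, the dominant term in $F(\tau,\Lambda)$ is $\Lambda^{2-p}w^{2-p}(\alpha-\beta\Lambda)$, which forces $\Lambda(\tau)\to\alpha/\beta=\nu$. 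A quantitative differential-inequality lemma (Lemma~\ref{lemma.ODEs}) then yields exponential decay of $|\Lambda-\nu|$, hence $\Lambda-\nu\in L^{1}(0,\infty)$; from the representation $r^{\nu}f(r;a)=f(1;a)\exp\!\big(-\int_{0}^{\log r}(\Lambda-\nu)\big)$ one reads off simultaneously the existence of the limit $k(a)$ \emph{and} its strict positivity. So the paper never needs an a priori bound on $\phi$, nor a separate positivity step.

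Your outline, by contrast, has two genuine gaps.

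\textbf{Step~1.} The local-maximum computation is correct and does give $\phi(r_0)\le K$ at any interior maximum. But the ``residual case'' where $\phi$ is eventually monotone increasing to $+\infty$ is not excluded by the ingredients you list. From $\phi'\ge 0$ you only get $|rf'|\le\nu f$, i.e.\ $|f'|\le\nu f/r$; plugging this (or the bound $|f'|\le C_4 f^{2/p}$ from Proposition~\ref{prop.gradient}) into $(\star)$ and integrating produces inequalities of the form $\beta\phi(R)\le C+C\phi(R)^{q}+C\phi(R)^{p-1}$, which yield no contradiction when $q\ge 1$. A genuine argument here would have to exploit $w\to\infty$ in an essential way --- this is exactly what the paper's $\Lambda$-formulation does.

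\textbf{Step~3.} You acknowledge this is the crux, and indeed it is not proved. Knowing only $\phi(r_j)\to 0$, $r_j\phi'(r_j)\to 0$, $r_j^{2}\phi''(r_j)\to 0$ gives no control on $w(r_j)=r_j^{\mu-\nu}\phi(r_j)$ or on $r_jw'(r_j)$ separately, since the rate at which $\phi(r_j)\to 0$ is unknown. Evaluating \eqref{ODE2} at $r_j$ therefore does not produce the claimed ``surviving balance''. The closing sentence about \eqref{ODE2} admitting no other monotone-unbounded power-law solution is a heuristic, not an argument; making it rigorous is essentially equivalent to the full proposition. In the paper's approach this difficulty never arises: positivity of $k(a)$ is automatic from the exponential representation once $\Lambda-\nu\in L^{1}$.

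Step~2 is fine, conditionally on Step~1.
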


While the outcome of Proposition~\ref{prop.caractC} is similar to that of \cite[Theorem 4.1]{CQW03}, only the first step of the proof of Proposition~\ref{prop.caractC} borrows some ideas of the proof of \cite[Theorem 4.1]{CQW03}, that is, the introduction of a new variable and a new unknown function. The other two steps are different owing to the different nature of the absorption on the one hand (Step~2) and the fact that $q$ takes values below $1$ on the other hand (Step~3). Concerning the latter, we prove a general lemma.

\begin{lemma}\label{lemma.ODEs}
Let $X\in C^1([0,\infty))$ satisfy the differential inequality
\begin{equation}\label{difineq}
X'(t)+C_5 e^{\gamma t}X(t)\leq C_6(1+e^{\delta t}), \quad t>0\,, 
\end{equation}
for some $0<\delta<\gamma$ and $C_5>0$, $C_6>0$. Given $\theta\in (0,\gamma-\delta)$, there exist $t_\theta>0$ and $K_\theta>0$ depending only on $\gamma$, $\delta$, $C_5$, $C_6$, $X(0)$, and $\theta$ such that $X(t)\leq K_\theta e^{-\theta t}$ for all $t\ge t_\theta$.
\end{lemma}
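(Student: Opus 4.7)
The natural approach is the integrating factor method. I would set
\[
\Phi(t) := \exp\!\left( \frac{C_5 (e^{\gamma t} - 1)}{\gamma} \right),
\]
which satisfies $\Phi'(t) = C_5 e^{\gamma t} \Phi(t)$. Multiplying \eqref{difineq} by $\Phi$, the left-hand side becomes $(X\Phi)'(t)$, so integration from $0$ to $t$ gives
\[
X(t)\, \Phi(t) \le X(0) + C_6 \int_0^t (1 + e^{\delta s})\, \Phi(s)\, ds.
\]
The whole problem thus reduces to showing that the right-hand side is of order $\Phi(t)\, e^{-\theta t}$ as $t \to \infty$.

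The key step is the estimate $\int_0^t e^{\delta s} \Phi(s)\, ds \le K_1 \Phi(t)\, e^{(\delta - \gamma)t} + K_1'$ for some constants depending only on $\gamma$, $\delta$, $C_5$. To prove it I would exploit the super-exponential growth of $\Phi$ through the identity
\[
\frac{d}{ds}\!\left[ e^{(\delta - \gamma)s} \Phi(s) \right] = \left[ C_5 e^{\gamma s} - (\gamma - \delta) \right] e^{(\delta - \gamma)s} \Phi(s).
\]
Picking $s_1 > 0$ large enough so that $C_5 e^{\gamma s_1} \ge 2(\gamma - \delta)$, the bracket dominates for $s \ge s_1$ and the derivative above is at least $(C_5/2)\, e^{\delta s} \Phi(s)$. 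Integrating from $s_1$ to $t$ yields the desired bound on $\int_{s_1}^t e^{\delta s} \Phi(s)\, ds$, and the integral over $[0, s_1]$ contributes only a constant. Applying the same argument with $\delta$ replaced by $0$ produces an analogous estimate for $\int_0^t \Phi(s)\, ds$.

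Feeding these bounds back into the inequality gives, for $t$ large enough,
\[
X(t) \le \frac{X(0) + K_2}{\Phi(t)} + K_3\, e^{-(\gamma - \delta)t}.
\]
Because $\Phi(t) \to \infty$ faster than any exponential, the first term is eventually dominated by $e^{-\theta t}$ for every $\theta > 0$; the second is a fortiori bounded by $K_3\, e^{-\theta t}$ whenever $\theta < \gamma - \delta$. Choosing $t_\theta$ large enough makes both terms at most $(K_\theta/2)\, e^{-\theta t}$, which yields the conclusion.

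The main obstacle, as I see it, is recognizing that the trivial bound $\Phi(s) \le \Phi(t)$ for $s \in [0, t]$ is far too crude: since $\Phi$ grows super-exponentially, the integral $\int_0^t e^{\delta s} \Phi(s)\, ds$ is concentrated near $s = t$ and behaves like $\Phi(t)\, e^{(\delta - \gamma) t}$ times a constant, rather than like $\Phi(t)$. Uncovering this gain via the monotonicity identity above is the only substantive idea in the proof; everything else is bookkeeping.
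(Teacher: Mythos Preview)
Your proof is correct and follows the same overall strategy as the paper --- an integrating-factor argument --- but the execution differs. The paper first changes variable to $\tau = e^{\gamma t}$, which turns the coefficient $C_5 e^{\gamma t}$ into a constant and reduces the integrating factor to a simple exponential $e^{C_7\tau}$ with $C_7=C_5/\gamma$; the resulting integral $\int_1^\tau (\sigma^{-1}+\sigma^{-(\gamma-\delta)/\gamma}) e^{C_7(\sigma-\tau)}\,d\sigma$ is then estimated by splitting $(1,\tau)$ into three pieces, $(1,\tau/2)$, $(\tau/2,\tau-(\log\tau)/C_7)$, and $(\tau-(\log\tau)/C_7,\tau)$, each handled by crude bounds. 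You instead keep the original variable and the super-exponential weight $\Phi$, and replace the three-way split by the single derivative identity $\frac{d}{ds}\big[e^{(\delta-\gamma)s}\Phi(s)\big] = \big[C_5 e^{\gamma s}-(\gamma-\delta)\big] e^{(\delta-\gamma)s}\Phi(s)$, which for large $s$ dominates $\tfrac{C_5}{2} e^{\delta s}\Phi(s)$ and so bounds the integral in one stroke. Your route is shorter and arguably cleaner; the paper's change of variable makes the integrating factor more transparent at the cost of a more laborious integral estimate. Both arrive at the same decay rate $e^{-\theta t}$ for any $\theta<\gamma-\delta$.
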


\begin{proof}
Make the change of variable $X(t)=Y(e^{\gamma t})=Y(\tau)$ for $t\ge 0$, where
$\tau:=e^{\gamma t}\ge 1$. Then the inequality \eqref{difineq} becomes
$$
\gamma Y'(\tau)+C_5Y(\tau)\leq \frac{C_6}{\tau}\ \left( 1 +\tau^{\delta/\gamma}\right), \quad \tau\ge 1,
$$
or, equivalently, after a straightforward transformation
$$
\frac{d}{d\tau}\left( Y(\tau)e^{C_7\tau} \right)\leq
C_6\left(\frac{1}{\tau}+\frac{1}{\tau^{(\gamma-\delta)/\gamma}}\right)e^{C_7\tau}, \quad \tau\ge 1,
$$
with $C_7:=C_5/\gamma$. After integration over $(1,\tau)$, $\tau\ge 1$, we obtain
\begin{equation}\label{eqint}
Y(\tau)\leq
Y(1)e^{-C_7(\tau-1)}+C_6\int_1^{\tau}\left(\frac{1}{\sigma}+\frac{1}{\sigma^{(\gamma-\delta)/\gamma}}\right)e^{C_7(\sigma-\tau)}\,d\sigma, \quad \tau\ge 1.
\end{equation}
We take $\tau\ge 2$ sufficiently large such that $\tau/\log{\tau}\geq 2/C_7$. In order to estimate the integral in the
right-hand side of \eqref{eqint}, we split it and handle differently the contributions to the integral of the intervals $(1,\tau/2)$, $(\tau/2, \tau-(\log{\tau})/C_7)$ and  $(\tau-(\log{\tau})/C_7,\tau)$. First,
$$
\int_1^{\tau/2}\left(\frac{1}{\sigma}+\frac{1}{\sigma^{(\gamma-\delta)/\gamma}}\right) e^{C_7(\sigma-\tau)}\,d\sigma \le 2 \int_1^{\tau/2} e^{-C_7 \tau/2}\, d\sigma \le \tau\ e^{-C_7 \tau/2}.
$$
Next, $\sigma-\tau \le -(\log{\tau})/C_7$ for $\sigma\in (\tau/2, \tau-(\log{\tau})/C_7)$ so that
\begin{eqnarray*}
\int_{\tau/2}^{\tau-(\log{\tau})/C_7} \left(\frac{1}{\sigma}+\frac{1}{\sigma^{(\gamma-\delta)/\gamma}}\right) e^{C_7(\sigma-\tau)}\,d\sigma & \le & \frac{1}{\tau}\ \int_{\tau/2}^{\tau-(\log{\tau})/C_7} \left(\frac{1}{\sigma}+\frac{1}{\sigma^{(\gamma-\delta)/\gamma}}\right)\,d\sigma \\
& \le & \frac{1}{\tau}\ \left[ \log{\sigma} + \frac{\gamma}{\delta}\ \sigma^{\gamma/\delta} \right]_{\sigma=\tau/2}^{\sigma=\tau} \\
& \le & \frac{\gamma}{\delta}\ \left( \frac{1}{\tau}+\frac{1}{\tau^{(\gamma-\delta)/\gamma}} \right).
\end{eqnarray*}
Finally, if $\sigma\in (\tau-(\log{\tau})/C_7,\tau)$, we have $\sigma\ge \tau/2$ and $\sigma-\tau<0$ and 
\begin{eqnarray*}
\int_{\tau-(\log{\tau})/C_7}^{\tau} \left(\frac{1}{\sigma}+\frac{1}{\sigma^{(\gamma-\delta)/\gamma}}\right) e^{C_7(\sigma-\tau)}\,d\sigma & \le & \int_{\tau-(\log{\tau})/C_7}^{\tau} \left(\frac{2}{\tau} + \left( \frac{2}{\tau} \right)^{(\gamma-\delta)/\gamma} \right) \,d\sigma\\
& \le & \frac{2 \log{\tau}}{C_7}\ \left( \frac{1}{\tau} + \frac{1}{\tau^{(\gamma-\delta)/\gamma}} \right).
\end{eqnarray*}
Plugging these estimates into \eqref{eqint}, we obtain that, given $\varrho\in (0,(\gamma-\delta)/\delta)$, there are $K_\varrho>0$ and $\tau_\varrho\ge 2$ such that $Y(\tau)\leq K_\varrho \tau^{-\varrho}$ for all $\tau\ge \tau_\varrho$. Coming back to $X$ gives the claim.
\end{proof}

The proof of Proposition~\ref{prop.caractC} is divided into several steps.

\begin{proof}[Proof of Proposition~\ref{prop.caractC}]
\noindent \textbf{Step 1. Introducing a new function.} Fix $a\in C$. As in \cite[Theorem~4.1]{CQW03}, we define a new function $\Lambda$ by
\begin{equation}
f(e^{\tau};a)=f(1;a)\exp\left(-\int_{0}^{\tau}\Lambda(s)\,\ds\right)\,, \quad \tau\in\real.
\label{spirou}
\end{equation}
Then
\begin{equation*}
f'(e^{\tau};a)=-e^{-\tau}\Lambda(\tau)f(e^{\tau};a), \quad
f''(e^{\tau};a)=e^{-2\tau}f(e^{\tau};a)[\Lambda(\tau)+\Lambda(\tau)^2-\Lambda'(\tau)],
\end{equation*}
and it follows from \eqref{ODE2} that $\Lambda$ solves the following differential equation
\begin{equation}\label{ODE4}
(p-1)\Lambda'(\tau)=F(\tau,\Lambda(\tau))\,, \quad \tau\in\real\,,
\end{equation}
with
$$
F(\tau,\Lambda):=(p-1)\Lambda^2+(p-N)\Lambda+\Lambda^{2-p} w(e^{\tau};a)^{2-p}\ \left[ \a-\b\Lambda-\Lambda^qe^{-q\tau}f(e^{\tau};a)^{q-1} \right]\,.
$$
Since $a\in C$, we have $R(a)=\infty$ and it follows from Lemma~\ref{lemma.basic} that $f'(.;a)<0$, whence $\Lambda>0$. In addition, for $\tau\in\real$,
$$
w'(e^{\tau};a)=e^{(\mu-1)\tau} \left( e^{\tau} f'(e^{\tau};a) + \mu f(e^{\tau};a) \right) = e^{(\mu-1)\tau} (\mu-\Lambda) f(e^{\tau};a) > 0
$$
since $a\in C$, from which we deduce that $\Lambda(\tau)<\mu$ for all $\tau\in\real$.

\noindent \textbf{Step 2. Limit of $\Lambda$ as $\tau\to\infty$.} We claim that
\begin{equation}\label{limlam}
\lim\limits_{\tau\to\infty}e^{-q\tau}f(e^{\tau};a)^{q-1}=0.
\end{equation}
Indeed, either $q\geq1$ and \eqref{limlam} is obvious thanks to the
boundedness of $f$ which guarantees that
$$
e^{-q\tau}f(e^{\tau};a)^{q-1} \le a^{q-1}\ e^{-q\tau}\,, \quad \tau\in\real\,,
$$ 
or $q\in(p/2,1)$ and
$$
e^{-q\tau}f(e^{\tau};a)^{q-1}=e^{-q\tau}e^{-\mu(q-1)\tau}w(e^{\tau};a)^{q-1}=e^{-(2q-p)\tau/(2-p)}w(e^{\tau};a)^{q-1},
$$
and both terms converge to zero as $\tau\to\infty$ since $a\in C$ and
$q\in(p/2,1)$. Hence \eqref{limlam} holds true. Consequently, since $a\in C$, we have
$$
\lim\limits_{\tau\to\infty}F(\tau,\gamma)=+\infty \ \hbox{if} \
\gamma<\frac{\a}{\b}, \quad
\lim\limits_{\tau\to\infty}F(\tau,\gamma)=-\infty \ \hbox{if} \
\gamma>\frac{\a}{\b},
$$
which ensures that
\begin{equation}\label{limlam2}
\lim\limits_{\tau\to\infty}\Lambda(\tau)=\frac{\a}{\b}.
\end{equation}

\noindent \textbf{Step 3. Exponential decay of $\Lambda-\a/\b$.} We study the rate of
convergence of $\Lambda(\tau)$ to $\a/\b$ as $\tau\to\infty$. This
last step of the proof is done in \cite{CQW03} by construction of
suitable sub- and supersolutions. We give here an alternative and
direct approach based on Lemma~\ref{lemma.ODEs} which allows us to handle also the case $q<1$.

Since $\Lambda(\tau)\to\a/\b$ as $\tau\to\infty$, taking into
account the definition of $\Lambda$, we find
$$
\lim\limits_{r\to\infty}\frac{rf'(r)}{f(r)}=-\frac{\a}{\b},
$$
hence, for any $\e\in (0,\a/\b)$, there exists $r_{\e}\ge 1$ such that
\begin{equation}
-\frac{\a}{\b}-\e\leq\frac{rf'(r)}{f(r)} = - \Lambda(\log{r}) \leq-\frac{\a}{\b}+\e, \quad r>r_{\e}.
\label{phl21}
\end{equation}
By integrating, we obtain that there exists a constant $K_{\e}>0$ such that
\begin{equation}\label{corBPT}
K_{\e}r^{-(\a/\b)-\e}\leq f(r)\leq K_{\e}r^{-(\a/\b)+\e}, \quad r>r_{\e}.
\end{equation}
We then come back to the differential equation \eqref{ODE4} and check that the conditions required to
apply Lemma~\ref{lemma.ODEs} with either $X=(\Lambda-\a/\b)_+$ or $X=(\a/\b-\Lambda)_+$ are fulfilled. Indeed, we can
write \eqref{ODE4} in the form
\begin{equation}\label{ODE5}
(p-1) \Lambda'(\tau) + \b\Lambda(\tau)^{2-p} w(e^\tau;a)^{2-p}\ \left(\Lambda(\tau) - \frac{\a}{\b} \right) = S(\tau)\,, \quad \tau\in\real\,,
\end{equation}
with
$$
S(\tau) := (p-1)\Lambda(\tau)^2+(p-N)\Lambda(\tau) - \Lambda(\tau)^{q-p+2} w(e^\tau;a)^{2-p} e^{-q\tau} f(e^\tau;a)^{q-1}\,.
$$ 
On the one hand, owing to the boundedness of $\Lambda$, the non-negativity of $f(.;a)$, $w(.;a)$, and $\Lambda$, and \eqref{corBPT}, we have
\begin{equation}
S(\tau) \le (p-1)\mu^2 + p \mu\,, \quad \tau\in\real\,,
\label{phl22}
\end{equation}
and 
\begin{eqnarray}
S(\tau) & \ge & - N \mu - \mu^{q-p+2}\ e^{(p-q)\tau}\ f(e^\tau;a)^{q-p+1} \nonumber \\
& \ge & - N \mu - \mu^{q-p+2}\ e^{(p-q)\tau}\ K_\e^{q-p+1}\ e^{(\e(q-p+1)+q-p)\tau} \nonumber\\
S(\tau)  & \ge & - N \mu - \mu^{q-p+2}\ K_\e^{q-p+1}\ e^{\e(q-p+1)\tau}\,, \quad \tau\ge \log{r_\e}\,. \label{phl23}
\end{eqnarray}
On the other hand, it follows from \eqref{limlam2}, \eqref{phl21}, and \eqref{corBPT} that 
\begin{equation}
\beta \Lambda(\tau)^{2-p}\ w(e^{\tau};a)^{2-p} \ge \beta\ \left( \frac{\a}{\b} - \e \right)^{2-p}\ K_\e^{2-p}\ e^{(2-p)(\mu-(\a/\b)-\e)\tau}\,, \quad \tau\ge \log{r_\e}\,.
\label{phl24}
\end{equation}

We now fix $\e\in (0,\a/\b)$ such that 
$$
\e \le \frac{\a}{2\b} \;\;\mbox{ and }\;\; 2\e (q-p+1) < \frac{1}{\b} - \e (2-p) = (2-p) \left( \mu-\frac{\a}{\b}-\e \right)\,,
$$ 
such a choice being always possible since $\mu>\a/\b$. Introducing $X_1:=(\Lambda-\a/\b)_+$, we infer from \eqref{ODE5} and \eqref{phl22} that 
$$
(p-1) X_1'(\tau) + \b\Lambda(\tau)^{2-p} w(e^\tau;a)^{2-p}\ X_1(\tau) \le S(\tau)_+ \le (p-1)\mu^2 + p \mu\,,
$$
while \eqref{phl24}, the non-negativity of $X_1$, and the choice of $\e$ guarantee that
$$
(p-1) X_1'(\tau) + \b\ \left( \frac{\a K_\e}{2\b} \right)^{2-p}\ e^{(2-p)(\mu-(\a/\b)-\e)\tau}\ X_1(\tau) \le S(\tau)_+ \le (p-1)\mu^2 + p \mu
$$
for $\tau\ge \log{r_\e}$. Consequently, $X_1$ satisfies a differential inequality of the form of
\eqref{difineq} with $\gamma=(\mu-(\a/\b)-\e)(2-p)>0$ and $\delta=0$. We then apply Lemma~\ref{lemma.ODEs} and conclude that  
$$
\left (\Lambda(\tau)-\frac{\a}{\b} \right)_+ \leq C\ e^{-\e (q-p+1) \tau}
$$
for $\tau$ large enough. Similarly, setting $X_2:= (\a/\b - \Lambda)_+$, it follows from \eqref{ODE5} and \eqref{phl23} that 
\begin{eqnarray*}
(p-1) X_2'(\tau) + \b\Lambda(\tau)^{2-p} w(e^\tau;a)^{2-p}\ X_2(\tau) & \le & \left( - S(\tau) \right)_+ \\
& \le & N \mu + \mu^{q-p+2}\ K_\e^{q-p+1}\ e^{\e(q-p+1)\tau}\,,
\end{eqnarray*}
and from \eqref{phl24}, the non-negativity of $X_1$, and the choice of $\e$ that
\begin{equation*}
\begin{split}
(p-1) X_2'(\tau) & + \b\ \left( \frac{\a K_\e}{2\b} \right)^{2-p}\ e^{(2-p)(\mu-(\a/\b)-\e)\tau}\ X_2(\tau) \\
& \le N \mu + \mu^{q-p+2}\ K_\e^{q-p+1}\ e^{\e(q-p+1)\tau}
\end{split}
\end{equation*}
for $\tau\ge \log{r_\e}$. Consequently, $X_2$ satisfies a differential inequality of the form of \eqref{difineq} with $\gamma=(\mu-(\a/\b)-\e)(2-p)$ and $\delta=\e(q-p+1)$, and the choice of $\e$ guarantees that $\gamma-\delta\ge \delta>0$. We then apply Lemma~\ref{lemma.ODEs} and conclude that  
$$
\left (\frac{\a}{\b} - \Lambda(\tau) \right)_+ \le C\ e^{-\e (q-p+1) \tau}
$$
for $\tau$ large enough. We have thus established that $|\Lambda(\tau)-\a/\b|\le C\ e^{-\e (q-p+1) \tau}$ for $\tau$ large enough. In particular, $\Lambda-\a/\b$ belongs to $L^1(1,\infty)$ and, recalling \eqref{spirou}, we realize that 
\begin{equation*}
\begin{split}
r^{\a/\b}f(r;a)&=f(1;a)\exp\left(-\int_0^{\log\,r}\left(\Lambda(\tau)-\frac{\a}{\b}\right)\,d\tau\right)\\
&\mathop{\longrightarrow }_{r\to\infty} f(1;a)\exp\left(-\int_0^{\infty}\left(\Lambda(\tau)-\frac{\a}{\b}\right)\,d\tau\right)=:k(a)\in(0,\infty)\,,
\end{split}
\end{equation*}
as stated in Proposition~\ref{prop.caractC}.
\end{proof}


\subsection{Characterization of $B$. Uniqueness}\label{sec:cbu}

Since $A=(0,a_*)$ and $C=(a^*,\infty)$, we already know that $B$ is
non-empty, that is, there exists at least one radially symmetric self-similar very
singular solution. We complete the proof of Theorem~\ref{th:main} by the
uniqueness result.

\begin{proposition}\label{prop.uniq}
We have $a_*=a^*$, thus $B$ contains only one element.
\end{proposition}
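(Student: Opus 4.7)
The plan is to argue by contradiction: assume $a_{*}<a^{*}$, so that Propositions~\ref{prop.A} and~\ref{prop.C} force the set $B$ to contain the non-degenerate interval $(a_{*},a^{*})$, pick two values $a_{1}<a_{2}$ in $B$, and work toward a contradiction in three steps.

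First, I would establish that $\lim_{r\to\infty} w(r;a)=w^{*}$ for every $a\in B$. Since $w(\cdot;a)$ is bounded and strictly increasing on $(0,\infty)$, it has a finite nonnegative limit $\ell$. I would then repeat the argument of Step~1 in the proof of Lemma~\ref{lemma.caractA2}, extracting via Lemma~\ref{lemma.sequence} a sequence $r_{k}\to\infty$ along which $r_{k}w'(r_{k};a)\to 0$ and $r_{k}^{2}w''(r_{k};a)\to 0$, and evaluating \eqref{ODE2} at $r=r_{k}$ to conclude that $\ell\in\{0,w^{*}\}$; the strict positivity of $w$ in $(0,\infty)$ rules out $\ell=0$.

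Second, I would exploit Lemma~\ref{monotonicity}. Since $w'(\cdot;a)>0$ on the whole of $(0,\infty)$ for every $a\in B$, the monotonicity lemma may be applied on every finite interval $(0,r_{0})$, which yields $\partial_{a}w(r;a)>0$ together with $\mu a\,\partial_{a}w(r;a)>rw'(r;a)>0$ for all $r>0$ and $a\in[a_{1},a_{2}]$. Integrating $\partial_{a}w$ in $a$ then gives the strict pointwise ordering $w(r;a_{2})>w(r;a_{1})$ on $(0,\infty)$, which is compatible with but not by itself contradicted by the common limit $w^{*}$ obtained in the previous step.

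Third, and this is where I expect the main obstacle, I would borrow the argument of \cite{CQW03} to upgrade the strict ordering into a contradiction via Lemma~\ref{lemma.comparison}. The natural auxiliary function is $h_{l}(r):=l\,\partial_{a}w(r;a)-rw'(r;a)$ for $a\in[a_{1},a_{2}]$ and a parameter $l>0$ to be tuned: by Lemma~\ref{lemma.monot2} one has $L_{a}(h_{l})=-L_{a}(rw'(\cdot;a))>0$ in $(0,\infty)$, the expansions in Lemma~\ref{lemma.monot2} prescribe the sign of $h_{l}$ near $r=0$ depending on whether $l=\mu a$ (positive) or $l<\mu a$ (negative), and Steps~1--2 force $h_{l}(r)\to 0$ along a suitable sequence $r\to\infty$. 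For $l$ slightly below $\mu a$, $h_{l}$ is negative near the origin but close to $h_{\mu a}>0$ on compact sets, so $h_{l}$ must possess a zero $r_{1}>0$ and, by the asymptotic decay, either another zero $r_{2}>r_{1}$ or a sign-keeping tail; the former case contradicts Lemma~\ref{lemma.comparison}, since $L_{a}(h_{l})>0$ on $(r_{1},r_{2})$ and $h_{l}$ is positive just to the right of $r_{1}$. The delicate point, which is the main obstacle, is to tune $l$ (and possibly $a\in(a_{1},a_{2})$) so that $h_{l}$ actually vanishes at two finite points, which requires combining the quantitative information from Step~2 (strict monotonicity throughout $(0,\infty)$) with the asymptotic degeneracy from Step~1 (both $\partial_{a}w$ and $rw'$ collapse in the limit because every orbit in $B$ converges to the same $w^{*}$). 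Once this is achieved, Lemma~\ref{lemma.comparison} produces the desired contradiction and hence $a_{*}=a^{*}$.
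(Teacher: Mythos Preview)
Your first two steps are fine: $w(r;a)\to w^*$ for each $a\in B$, and Lemma~\ref{monotonicity} applied on every finite interval yields $\partial_a w(\cdot;a)>0$ and the strict ordering $w(\cdot;a_2)>w(\cdot;a_1)$. The gap is in Step~3, and it has two layers.

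First, in Step~1 you only obtain $r_k w'(r_k;a)\to 0$ along a subsequence, whereas the argument actually requires the full limit $rw'(r;a)\to 0$ as $r\to\infty$. This is not a trivial upgrade: the paper obtains it from the gradient bound $|f'|\le C_4 f^{2/p}$ of Proposition~\ref{prop.gradient}, which allows one to integrate the equation and compute the precise asymptotic $f'(r;a)\sim -c\,r^{-2/(2-p)}$, and then to check by an explicit cancellation that $r^{2/(2-p)}f'(r;a)+\mu r^{p/(2-p)}f(r;a)\to 0$. Without this, the coefficients of $L_a$ cannot be identified at infinity.

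Second, and more seriously, your proposed mechanism for the contradiction is based on the expectation that ``both $\partial_a w$ and $rw'$ collapse in the limit because every orbit in $B$ converges to the same $w^*$''. This is exactly what is \emph{false}: the paper proves that $\partial_a w(r;a)\to +\infty$ as $r\to\infty$ for each $a\in B$. The common limit $w^*$ does not force $\partial_a w\to 0$ because the convergence $w(r;a)\to w^*$ is not uniform in $a$. Consequently your auxiliary function $h_l=l\,\partial_a w-rw'$ tends to $+\infty$, not to $0$; once it crosses from negative (near $r=0$, for $l<\mu a$) to positive, it stays positive and never produces a second zero, so Lemma~\ref{lemma.comparison} yields nothing. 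Even if one only had $h_l\to 0$ along a subsequence, the comparison principle would merely forbid $h_l$ from touching zero again, which is perfectly compatible with $h_l>0$ decaying to $0$; no contradiction with $a_*<a^*$ emerges from this route.

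The paper's actual argument (following \cite{CQW03}) is different from what you sketch. With $rw'(r;a)\to 0$ and $w(r;a)\to w^*$ in hand, one computes the limits of the coefficients of $L_a$ at infinity and reduces $L_a(\varphi)=0$, via the change of variable $\tau=\log r$ and the canonical form, to a constant-coefficient equation whose characteristic roots are real and of opposite sign. Hence among any two linearly independent solutions of $L_a(\varphi)=0$, at least one is unbounded. One then takes $\psi$ to be the solution with $\psi(1)=0$, $\psi'(1)=1$; the argument of Lemma~\ref{monotonicity} pushed to $r_0=\infty$ gives $\partial_a w(\cdot;a)>\kappa\psi>0$ on $(1,\infty)$, so $\partial_a w(\cdot;a)$ is the unbounded one. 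The contradiction then comes from Fatou's lemma:
\[
0=\lim_{r\to\infty}\bigl(w(r;a^*)-w(r;a_*)\bigr)=\lim_{r\to\infty}\int_{a_*}^{a^*}\partial_a w(r;a)\,da\ \ge\ \int_{a_*}^{a^*}\liminf_{r\to\infty}\partial_a w(r;a)\,da=\infty.
\]
The key idea missing from your plan is thus the asymptotic analysis of the linear equation $L_a(\varphi)=0$ at infinity and the resulting unboundedness of $\partial_a w(\cdot;a)$.
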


\begin{proof}
Let $a\in B$. Then, $R(a)=\infty$, $w(.;a)$ is increasing and has a finite limit as $r\to\infty$. Arguing as in the first step of the proof of Lemma~\ref{lemma.caractA2}, we deduce that 
\begin{equation}
\lim\limits_{r\to\infty} r^{\mu}f(r;a) = \lim\limits_{r\to\infty} w(r;a)=w^*.
\label{phl30}
\end{equation}
As in \cite{Pe042}, the proof is divided into two steps, with a different argument to prove the first step.

\noindent \textbf{Step 1.} We show that $rw'(r;a)\longrightarrow 0$ as
$r\to\infty$ for $a\in B$. Indeed, we notice that the
differential equation \eqref{IVPa} also reads
\begin{equation*}
\frac{d}{dr}(r^{N-1}|f'(r)|^{p-2}f'(r)+\b
r^Nf(r))=r^{N-1}(|f'(r)|^q-(\a-\b N)f(r))\,, \quad r\ge 0\,.
\end{equation*}
Since $q>p/2$, it follows from Lemma~\ref{lemma.basic} and Proposition~\ref{prop.gradient} that, as $r\to\infty$, 
$$
|f'(r;a)|^q\leq C_4\ f(r;a)^{2q/p}=o(f(r;a)),
$$
and we deduce from \eqref{phl30} that, as $r\to\infty$,
\begin{equation*}
\frac{d}{dr}(r^{N-1}|f'(r)|^{p-2}f'(r)+\b r^Nf(r))\sim-(\a-\b
N)w^*r^{N-1-\mu}.
\end{equation*}
Since $\mu>N$, we obtain after integration
\begin{equation*}
-r^{N-1}(-f'(r;a))^{p-1}+\b r^Nf(r;a)\sim(\a-\b
N)w^*\frac{r^{-(\mu-N)}}{\mu-N},
\end{equation*}
hence, using again \eqref{phl30},
$$
-(-f'(r;a))^{p-1}\sim-\b rf(r;a)+\frac{\a-\b
N}{\mu-N}w^*r^{-(\mu-1)} \sim \left(\frac{\a-\b
N}{\mu-N}-\b \right) w^* r^{-(\mu-1)}
$$
as $r\to\infty$. Noticing that
$$
\frac{\a-\b N}{\mu-N}-\b=-\frac{1}{p(N+1)-2N}<0,
$$
we conclude that, as $r\to\infty$,
$$
f'(r;a)\sim-\left(\frac{w^*}{p(N+1)-2N}\right)^{1/(p-1)}r^{-2/(2-p)}.
$$
Consequently,
\begin{equation*}
\begin{split}
\lim_{r\to\infty} rw'(r;a)&= \lim_{r\to\infty} \left( r^{2/(2-p)} f'(r;a) + \mu r^{p/(2-p)} f(r;a) \right)\\
&= -\left(\frac{w^*}{p(N+1)-2N}\right)^{1/(p-1)}+\mu w^*,
\end{split}
\end{equation*}
But from the definition and a simple computation we find
$$
w^*=\mu^{(p-1)/(2-p)}(p(N+1)-2N)^{1/(2-p)},
$$
that is
$$
\mu=(w^*)^{(2-p)/(p-1)}(p(N+1)-2N)^{-1/(p-1)},
$$
hence
\begin{equation*}
\mu w^*=(w^*)^{1+(2-p)/(p-1)}(p(N+1)-2N)^{-1/(p-1)}=\left(\frac{w^*}{p(N+1)-2N}\right)^{1/(p-1)}.
\end{equation*}
We have thus shown that
\begin{equation}
\lim_{r\to\infty} rw'(r;a) = 0\,.
\label{phl31}
\end{equation}

\medskip

\noindent \textbf{Step 2.} We now follow the argument in \cite[Theorem~5.1]{CQW03}. Let $a\in B$. Since $w(r;a)\longrightarrow w^*$ and $r w'(r;a) \longrightarrow 0$ as $r\to\infty$ by \eqref{phl30} and \eqref{phl31}, the homogeneous linear operator $L_a$ defined by \eqref{linearized} can be written as
\begin{equation*}
L_a(\varphi)=r^2 \varphi''(r) + P_1(r)\ r\ \varphi'(r) + P_0(r)\ \varphi(r)\,,
\end{equation*}
with 
\begin{equation}
\begin{split}
\lim\limits_{r\to\infty} P_1(r) & = b := \frac{N-1}{p-1} - 2\mu + \frac{p(N+1)-2N}{p-1}\ (1+\beta\mu)\,, \\ 
\lim\limits_{r\to\infty} P_0(r) & = -c\,, \quad c:= \mu \frac{p(N+1)-2N}{p-1} > 0\,.
\end{split}
\label{asterix}
\end{equation}
Owing to the negativity of $c$, we claim that, given two linearly independent solutions of the equation $L_a(\varphi)=0$, one of them has to be unbounded as $r\to\infty$. Taking this result for granted, we define the function $\psi$ as the solution to $L_a(\psi)=0$ in $(1,\infty)$ with $\psi(1)=0$, $\psi'(1)=1$. Arguing as in Lemma~\ref{monotonicity} with $r_0=\infty$ (recall that $R(a)=\infty$ as $a\in B$), we conclude that there is a positive real number $\kappa>0$ such that $\partial_a w(.;a)>\kappa \psi>0$ in $(1,\infty)$. Since $L_a(\partial_a w(.;a))=0$ with $\partial_a w(1;a)>0$ by Lemma~\ref{lemma.monot2} and Lemma~\ref{monotonicity}, $\partial_a w(.;a)$ and $\psi$ are clearly linearly independent solutions of the equation $L_a(\varphi)=0$. The above claim implies that one of them must be unbounded and thus that $\partial_a w(.;a)$ is unbounded since it dominates $\psi$.

We then deduce that, for all $a\in B$, $\partial_a w(r;a)\longrightarrow\infty$ as $r\to\infty$. Therefore, if
$a_*<a^*$, we can apply Fatou's lemma to get
\begin{equation*}
0=\lim\limits_{r\to\infty}[w(r,a^*)-w(r,a_*)]=\lim\limits_{r\to\infty}\int\limits_{a_*}^{a^*} \partial_a w(r;a)\, da \geq \int\limits_{a_*}^{a^*} \liminf\limits_{r\to\infty} \partial_a w(r;a) \,da = \infty,
\end{equation*}
which is a contradiction. Hence $a_*=a^*$.

Let us finally sketch the proof of the above claim. Introducing the new variable $\tau=\log{r}$, the linear operator $L_a$ transforms into the linear operator
$$
\tilde{L}_a(\psi)(\tau) := \tilde{\varphi}''(\tau) + \tilde{P}_1(\tau)\ \tilde{\varphi}'(\tau) + \tilde{P}_0(\tau)\ \tilde{\varphi}(\tau)
$$
with $\tilde{P}_1(\tau):= P_1(e^\tau)-1$ and $\tilde{P}_0(\tau):=P_0(e^\tau)$. Introducing next the change of function 
$$
\psi(\tau) = \exp{\left( \frac{1}{2}\ \int_0^\tau \tilde{P}_1(s)\, ds \right)}\ \tilde{\varphi}(\tau)\,, \quad \tau\ge 0\,,
$$
we obtain the canonical form $K_a$ of the operator $\tilde{L}_a$ which reads
$$
K_a(\psi)(\tau) := \psi''(\tau) + Q(\tau)\ \psi(\tau) \;\;\mbox{ with }\;\; Q := \tilde{P}_0 - \frac{\tilde{P_1}^2}{4} - \frac{\tilde{P_1}'}{2}\,.
$$
At this point, we note that \eqref{asterix} ensures that
\begin{equation}
\lim\limits_{\tau\to\infty} Q(\tau) = - \lambda_0^2 := -c - \frac{b^2}{4} < 0\,.
\end{equation}
Then, if $\psi$ is a solution to $K_a(\psi)=0$, the variation of constants formula implies that there are $\psi_0\in\real$ and $\psi_1\in\real$ such that 
$$
\psi(\tau) = \psi_0\ e^{-\lambda_0 \tau} + \psi_1\ e^{\lambda_0 \tau} + \int_0^\tau \left\{ \frac{Q(s)\psi(s)}{2\lambda_0}\ \left( e^{\lambda_0(\tau-s)}-e^{-\lambda_0(\tau-s)} \right) \right\}\ ds\,, \quad \tau\ge 0\,.
$$ 
Arguing by contradiction, it is now easy to show that, if $\psi_1>0$ and $\lambda\in (0,\lambda_0)$, then the function $\tau\mapsto e^{-\lambda\tau} \psi(\tau)$ cannot be bounded. Coming back to $L_a$ gives the expected result after choosing $\lambda$ appropriately close to $\lambda_0$.
\end{proof}

\appendix
\section{Viscosity solutions to \eqref{FDE}}\label{sec:vs}

We recall here the definition of viscosity solutions for singular
diffusion equations such as \eqref{FDE} as introduced in \cite{IS, OS}. Owing to the singularity of the diffusion, it differs from the usual definition of vicosity solutions (see, e.g., \cite{CIL92}) by restricting the class of comparison functions. More precisely, let $\Xi$ be the set of functions $\xi\in C^{2}([0,\infty))$ satisfying
\begin{equation}\label{restriction.visc}
\xi(0)=\xi'(0)=\xi''(0)=0, \ \xi''(r)>0 \ \hbox{for} \ \hbox{all} \ r>0,
\quad \lim\limits_{r\to 0}|\xi'(r)|^{p-2}\xi''(r)=0.
\end{equation}
For example, $\xi(r)=r^{\sigma}$ with $\sigma>p/(p-1)>2$ belongs to
$\Xi$. In fact, if $\xi\in\Xi$, it readily follows from \eqref{restriction.visc} that
\begin{equation}
\lim_{r\to 0} \frac{\xi(r)}{r^{p/(p-1)}} = 0.
\label{phl0}
\end{equation}
We next introduce the class $\ca$ of admissible comparison
functions $\psi\in C^2(Q_{\infty})$ defined as follows: $\psi\in\ca$
if, for any $(t_0,x_0)\in Q_{\infty}$ where $\nabla\psi(t_0,x_0)
=0$, there exist a constant $\delta>0$, a function $\xi\in\Xi$, and a
nonnegative function $\omega\in C([0,\infty))$ satisfying $\omega(r)/r\longrightarrow 0$ as $r\to 0$,
such that, for all $(t,x)\in Q_{\infty}$ with
$|x-x_0|+|t-t_0|<\delta$, we have
\begin{equation}\label{ineq.visc}
|\psi(t,x)-\psi(t_0,x_0)-\partial_t\psi(t_0,x_0)(t-t_0)|\le
\xi(|x-x_0|)+\omega(|t-t_0|).
\end{equation}

\begin{definition}\label{def:vs}
An upper semicontinuous function $u:Q_{\infty}\to\real$ is a
viscosity subsolution to \eqref{FDE} in $Q_{\infty}$ if, whenever
$\psi\in\ca$ and $(t_0,x_0)\in Q_{\infty}$ are such that
\begin{equation*}
u(t_0,x_0)=\psi(t_0,x_0), \quad u(t,x)<\psi(t,x), \ \mbox{for all}\
(t,x)\in Q_{\infty}\setminus\{(t_0,x_0)\},
\end{equation*}
then
\begin{equation}
\left\{\begin{array}{ll}\partial_t\psi(t_0,x_0)\leq\Delta_{p}\psi(t_0,x_0)-|\nabla\psi(t_0,x_0)|^{q}
&
\ \hbox{if} \ \nabla\psi(t_0,x_0)\neq0,\\
\partial_t \psi(t_0,x_0)\leq 0 & \ \hbox{if} \
\nabla\psi(t_0,x_0)=0.\end{array}\right.
\end{equation}
A lower semicontinuous function $u:Q_{\infty}\to\real$ is a
viscosity supersolution to \eqref{FDE} in $Q_{\infty}$ if $-u$ is a
viscosity subsolution to \eqref{FDE} in $Q_{\infty}$. A continuous
function $u:Q_{\infty}\to\real$ is a viscosity solution to
\eqref{FDE} in $Q_{\infty}$ if it is a viscosity subsolution and
supersolution.
\end{definition}

\bibliographystyle{plain}

\end{document}